\documentclass[12pt]{article}

\usepackage{amscd,amsmath, amssymb, fancyhdr,
	epsfig,color,bbm,url}
\definecolor{dblue}{rgb}{0,0,.6}

\usepackage{hyperref}

\hypersetup{
	colorlinks   = true,
	citecolor    = magenta,
	linkcolor= blue
}

\numberwithin{equation}{section}

\def\eqref#1{(\ref{#1})}

\newcommand{\Z}{{\Bbb Z}}
\newcommand{\C}{{\Bbb C}}

\newcommand{\R}{{\Bbb R}}

\def\1{\sqrt{-1}\.}

\newcommand{\cntrct}                
{\hspace{2pt}\raisebox{1pt}{\text{$\lrcorner$}}\hspace{2pt}}

\makeatletter

\renewcommand{\bar}{\overline}
\renewcommand{\phi}{\varphi}
\renewcommand{\epsilon}{\varepsilon}

\renewcommand{\d}{\partial}

\usepackage{amsthm}
\theoremstyle{definition}
\newtheorem{theorem}{Theorem}[section]
\newtheorem{lemma}[theorem]{Lemma}
\newtheorem{proposition}[theorem]{Proposition}

\newtheorem{cor}[theorem]{Corollary}
\newtheorem{defin}[theorem]{Definition}

\theoremstyle{remark}


\usepackage{tocloft}

\makeatletter

\@addtoreset{equation}{section} \@addtoreset{footnote}{section} \makeatother

\begin{document}
	\begin{center}
		{\LARGE\bf
			K\"unneth formula for Hessian manifolds}
		\medskip
		\medskip

		Pavel Osipov\footnote{National Research University Higher School of Economics, Russian Federation.}\footnote{The study was supported by the Russian Science Foundation grant No. 25-71-00054, https://rscf.ru/project/25-71-00054/}
	\end{center}

    	\begin{abstract}
We study Dolbeault--Koszul cohomology $H^{p,q}(M)$ of flat affine manifolds. 
We proove a K\"unneth formula
\[
H^{p,q}(M\times N) \cong \bigoplus_{i,j} H^{i,j}(M)\otimes H^{p-i,q-j}(N)
\]
for flat affine manifolds $M,N$ with at least one compact. 
For compact manifolds we also give a proof via Hodge theory on flat affine manifolds, 
analogous to the classical K\"unneth formula for Dolbeault cohomology.

We apply this formula to Hessian manifolds. 
A Hessian metric $g$ defines a class $[g]\in H^{1,1}(M)$, 
and metrics in the same class differ by $D\alpha$ for a closed $1$-form $\alpha$. 
Using the K\"unneth formula we describe all Hessian metrics on products, 
on products with hyperbolic manifolds, 
and on manifolds admitting a flat Riemannian metric.
        \end{abstract}
	
	%
    \tableofcontents
\section{Introduction}

A flat affine manifold $(M,D)$ is a manifold $M$ endowed with a flat affine connection $D$. Consider the space $\Lambda^{p,q}M:=\Lambda^pM\otimes \Lambda^qM$ as the space of differential $p$-forms with coefficients in  the flat vector bundle $(\Lambda^qM,D)$.  Then we have a complex
$$
\ldots \xrightarrow{\d}\Lambda^{p-1, q}M \xrightarrow{\d}  \Lambda^{p,q}M  \xrightarrow{\d}  \Lambda^{p,q+1} M \xrightarrow{\d}\ldots
$$
We denote the cohomology of this complex by $H^{p,q}(M)$.   These cohomology groups were introduced by Koszul (\cite{K}) and are analogs of Dolbeault cohomology. Hence, we call them Dolbeault–Koszul cohomology. These cohomology groups were studied by Shima in the context of Hessian geometry. The $L^2$ version of Dolbeault–Koszul cohomology was studied was studied by Akagawa \cite{A}. 

The present article is devoted to the Künneth formula  for Dolbeault–Koszul cohomology and its applications.
    
	\begin{theorem}[K\"unneth formula for Dolbeault–Koszul cohomology]\label{Kunnth}
        Let $M$ and $N$ be flat affine manifolds, with at least one of 
$M,N$ compact. Then we have 
        $$
        H^{p,q}(M\times N)=\bigoplus_{0\le i\le p, 0\le j\le q} H^{i,j}(M)\otimes H^{p-i,q-j}(N).
        $$
	\end{theorem}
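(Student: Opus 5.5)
Since $H^{p,q}(M)$ is the de Rham cohomology (in degree $q$) of $M$ with coefficients in the flat bundle $(\Lambda^pM,D)$, the plan is to reduce Theorem~\ref{Kunnth} to a Künneth formula for de Rham cohomology with flat coefficients. I assume here that the differential $\d$ raises the form degree and that $p$ is the coefficient index. On $M\times N$ the product connection $D=D_M\oplus D_N$ is flat, and $T^*(M\times N)=\pi_M^*T^*M\oplus \pi_N^*T^*N$ as flat bundles. Hence
\[
\Lambda^p(M\times N)=\bigoplus_{i}\pi_M^*\Lambda^iM\otimes \pi_N^*\Lambda^{p-i}N
\]
as flat bundles, and the Dolbeault--Koszul complex $(\Lambda^{p,\bullet}(M\times N),\d)$ splits into a direct sum over $i$ of the twisted de Rham complexes with coefficients in the external tensor product $E_i\boxtimes F_{p-i}$, where $E_i=(\Lambda^iM,D)$ and $F_{p-i}=(\Lambda^{p-i}N,D)$. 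It therefore suffices to prove, for flat bundles $E\to M$ and $F\to N$ with one of the bases compact,
\[
H^q(M\times N;E\boxtimes F)\cong\bigoplus_j H^j(M;E)\otimes H^{q-j}(N;F).
\]

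For this twisted Künneth formula I will use the standard Mayer--Vietoris / good-cover induction, as in Bott--Tu. Say $M$ is compact. The map
\[
\bigoplus_j \Omega^j(M;E)\otimes\Omega^{q-j}(N;F)\to\Omega^q(M\times N;E\boxtimes F),\qquad \alpha\otimes\beta\mapsto \pi_M^*\alpha\wedge\pi_N^*\beta,
\]
is a chain map for the twisted differentials, by the Leibniz rule for the flat product connection. It therefore induces a natural map $\Psi$ on cohomology. Fix $N$ and let $U$ range over open subsets of $M$. Both sides are functorial in $U$ and satisfy Mayer--Vietoris sequences. For the left side we use that tensoring over $\R$ with the fixed graded space $H^\bullet(N;F)$ is exact. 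The five lemma then shows that if $\Psi$ is an isomorphism for $U$, $V$ and $U\cap V$, it is one for $U\cup V$.

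The base case is a contractible $U$ (a convex chart). There $E|_U$ is trivialized by parallel frames, so $E|_U\cong U\times E_x$ as flat bundles. Then $(E\boxtimes F)|_{U\times N}\cong E_x\otimes\pi_N^*F$, and the twisted Poincaré lemma for the homotopy $U\times N\simeq N$ gives
\[
H^\bullet(U\times N;E\boxtimes F)\cong E_x\otimes H^\bullet(N;F)=H^0(U;E)\otimes H^\bullet(N;F).
\]
This is homotopy invariance of flat-coefficient cohomology, proved by the usual chain homotopy, which goes through because parallel sections are constant in a trivialization. Since $M$ is compact, it has a finite good cover by geodesically convex sets, and induction on the number of open sets finishes the proof.

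The main obstacle is the case where only one factor is compact. The induction must run over the compact factor; this is why finiteness of the cover is needed. No finite-dimensionality of $H^\bullet(N;F)$ is needed, because we only tensor with it and never dualize. A subtler point is that the algebraic tensor product appears on the left, and we need $\Psi$ to land correctly for the noncompact $N$. The Mayer--Vietoris argument handles this automatically, since only finitely many gluings occur. Finally, I will check that the decomposition of $\Lambda^p(M\times N)$ is compatible with $D$, so that the bigrading matches. Summing over $i$ then gives the stated formula with $i\le p$ and $j\le q$.
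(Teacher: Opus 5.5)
Your proof is correct, and the key step is done differently from the paper.

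\textbf{Shared reduction.} Both arguments split the coefficient local system on the product, $P^q_{M\times N}=\bigoplus_j \pi_1^*P^j\otimes\pi_2^*P^{q-j}$. In your language this is the decomposition of $\Lambda^\bullet(M\times N)$ into external tensor products of flat bundles. Either way, everything reduces to a Künneth theorem for cohomology with coefficients in an external tensor product of local systems.

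\textbf{Where the routes differ.} The paper obtains that theorem in two moves. It identifies $H^{p,q}(M)=H^p(M,P^q)$ through the fine resolution. It then cites the general sheaf-theoretic Künneth theorem (Theorem \ref{ThSHK}), whose Tor term vanishes over a field. You instead prove the twisted de Rham Künneth formula directly, in the style of Bott--Tu:
\begin{itemize}
\item a wedge-product chain map from the tensor product complex;
\item Mayer--Vietoris plus the five lemma, run over a finite good cover of the compact factor;
\item a base case where the flat bundle is trivialized by parallel frames on a contractible set, followed by homotopy invariance of flat-coefficient cohomology.
\end{itemize}

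\textbf{What each route buys.} The paper's argument is shorter, but it rests on a heavy black box whose hypotheses (compactness, finite generation, torsion-freeness) have to be matched to the situation. Your argument is self-contained and stays inside the smooth complex used in the rest of the paper. It shows exactly where compactness enters: finiteness of the cover. It also shows why no finite-dimensionality of $H^\bullet(N;F)$ is needed: tensoring over $\mathbb{R}$ is exact. Finally, it makes explicit that the isomorphism is induced by wedge products of pulled-back representatives. That is precisely the form in which Section 6 uses it, when it writes a Hessian metric as $g_M+g_N+\eta_M\otimes\theta_N+\theta_N'\otimes\eta_M'+D\alpha$.

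\textbf{Index convention.} In the paper, $\partial:\Lambda^{p,q}M\to\Lambda^{p+1,q}M$, so $p$ is the form degree and $q$ the coefficient index, i.e.\ $H^{p,q}(M)=H^p(M,P^q)$. This is the reverse of your assumption; the complex displayed in the introduction has inconsistent indices, which explains the confusion. Your argument treats the coefficient bundle and the form degree separately and symmetrically, so this is only a relabeling and does not affect the proof.
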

    This Künneth formula follows from a general Künneth formula for the cohomology of sheaves (Theorem \ref{ThSHK}). 

Shima developed a Hodge theory for compact flat affine manifolds that is analogous to the Hodge theory for compact complex manifolds (\cite{Sh2},\cite{Sh3}). Using this theory we prove a Künneth formula for $L^2$-forms on a product of flat affine manifolds, and then yields another proof of the Künneth formula for Dolbeault–Koszul cohomology in the case of two compact manifolds. This proof is analogous to the well‑known proof of the Künneth formula for Dolbeault cohomology (\cite{GH}), which highlights the similarity between flat affine  geometry and complex geometry. 

    A Hessian manifold is a flat affine manifold $(M,D)$ endowed with a Hessian metric $g$, that is, a Riemannian metric locally of the form  $g=D^2 f$. Since any K\"ahler metric can be locally expressed as a complex Hessian of a function, we can consider Hessian geometry as an analog of K\"ahler geometry. This continues the parallel between flat affine and complex geometry. For the study of Hessian manifolds, see \cite{Sh1},\cite{Sh2},\cite{Sh3},\cite{ShY},\cite{Y}.

    Compact Hessian manifolds are relatively little studied. Every compact Hessian manifold is the quotient of a convex domain $U\subset \R^n$ by a discrete subgroup of affine automorphisms (\cite{Sh1}). If the Hessian metric can be written as $D\alpha$ for a globally defined closed 1‑form $\alpha$ then the corresponding domain contains no straight lines (\cite{K}). Manifolds arising as quotients of such domains are called hyperbolic flat affine manifolds. If a compact manifold is the quotient of a convex domain $U$ by a discrete subgroup of automorphisms, then $U$ must be a cone (\cite{V}). Two very recent preprints have appeared on the topology of Hessian manifolds (\cite{G}, \cite{L}). They prove that the Euler characteristic of a compact Hessian manifold vanishes, and they classify the topological types of Hessian manifolds  in low dimensions. Let us also mention works on the Monge–Ampère equation on compact Hessian manifolds \cite{ChY}, \cite{GT}, \cite{JO}.

    Although Dolbeault–Koszul cohomology was defined long ago, only a few of its applications are known. Koszul proved that for a compact manifold $H^{1,1}(M)=0$ if and only if $M$ is a hyperbolic flat affine manifold, and deduced that the classical Hopf manifold does not admit a Hessian metric. 
    Shima proved a vanishing theorem analogous to that of Kodaira-Nakano (\cite{Sh2},\cite{Sh3}). Akagawa proved the $L^2$-version of the vanishing theorem.

    We apply Dolbeault-Koszul cohomology to the problem of describing all Hessian metrics on a flat affine manifold. A Riemannian metric is Hessian if and only if it is $\d$-closed as an element of $\Lambda^{1,1}M$. Moreover, for any $\alpha\in \Lambda^1M$ we have $\d(1\otimes\alpha)=D\alpha$ (\cite{Y}). Therefore, any Hessian metric $g$ determines a class $[g]\in H^{1,1}(M)$ and two metrics $g_1$ and $g_2$ belong to the same class if and only if we have $g_1-g_2=D\alpha$ for some 1-form $\alpha$. Note that if $g$ is a Hessian metric, then $g+D\alpha$ is a Hessian metric too, for any sufficiently small 1-form $\alpha$. Thus, it is enough to describe classes in $H^{1,1}(M)$ that represent positive definite symmetric forms.

    The K\"unneth formula allows us to calculate $H^{1,1}(M)$ in some cases, which helps to describe all Hessian metrics.

 For simplicity, we implicitly identify forms on $M$ and $N$ with their pullbacks to $M\times N$.
        \begin{theorem}
        Let $M$ and $N$ be Hessian manifolds, with at least one of 
$M,N$ compact. Then any Hessian metric $g$ on  $M\times N$ is equal to
        $$
        g=g_M+g_N+D \alpha,
        $$
        where $g_M, g_N$ are Hessian metrics on $M,N$ and $\alpha$ is a closed 1-form on $M\times N$.
        \end{theorem}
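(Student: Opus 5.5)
We will apply Theorem \ref{Kunnth} with $p=q=1$ to the class $[g]\in H^{1,1}(M\times N)$, after first producing $g_M,g_N$ as restrictions of $g$. Assume $M,N$ connected, so $H^{0,0}=\R$. The formula gives
$$
H^{1,1}(M\times N)\cong H^{1,1}(M)\oplus H^{1,1}(N)\oplus \bigl(H^{1,0}(M)\otimes H^{0,1}(N)\bigr)\oplus\bigl(H^{0,1}(M)\otimes H^{1,0}(N)\bigr).
$$
From the description of $\d$ in the introduction (with $\d(1\otimes\alpha)=D\alpha$), I expect $H^{0,1}$ to be the space of $D$-parallel $1$-forms and $H^{1,0}$ to be de Rham $H^1$. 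This identification should be checked against the paper's conventions before relying on it.

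\textbf{Step 1: the pure summands.} Fix $x_0\in M$ and $y_0\in N$, and put
$$
g_M:=g|_{M\times\{y_0\}},\qquad g_N:=g|_{\{x_0\}\times N}.
$$
These are Hessian metrics, since restricting to an affine slice preserves both $\d$-closedness and positivity. The inclusion $i_{y_0}:M\to M\times N$ is affine, so $i_{y_0}^*$ commutes with $\d$. I will check that on the Künneth decomposition $i_{y_0}^*$ is the projection onto $H^{1,1}(M)$. It kills each mixed summand because every such representative has a tensor factor that is a form on $N$, and that factor vanishes on $TM$. It kills $H^{1,1}(N)$ because its representatives are pulled back from $N$. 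Consequently $[g]-[g_M]-[g_N]$ lies in the mixed part. Set $h:=g-g_M-g_N$, a symmetric $\d$-closed element of $\Lambda^{1,1}(M\times N)$ whose class is purely mixed.

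\textbf{Step 2: the mixed part (main obstacle).} Write $h=c+D\beta$, where $c$ is a sum of terms $\omega\otimes\theta$ and $\theta'\otimes\omega'$: here $\omega,\omega'$ are closed and $\theta,\theta'$ are parallel (or the reverse, depending on the convention fixed above), and $\beta$ is a $1$-form. Since $h$ is symmetric, taking alternating parts gives $\mathrm{Alt}(c)=-\tfrac12 d\beta$, which is exact. By the de Rham Künneth formula, together with the injectivity of parallel forms into $H^1_{dR}$ on the compact factor (a parallel exact form $df$ has $f$ attaining a maximum, hence $df=0$ there, hence $df=0$ everywhere), the mixed class must lie in the symmetric part $\omega\otimes\theta+\theta\otimes\omega$ of the pairing.

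What remains is to show that such a symmetric mixed term equals $D\alpha'$ for some closed $\alpha'$, or can be absorbed. Locally this is clear: $dx\,dy+dy\,dx=D\,d(xy)$. Globally it is the delicate point. I would first try to realize $c_{\rm sym}$ as $D\bigl(d(\phi\,\psi)\bigr)$, using a primitive $\phi$ of $\omega$ on the universal cover and a global affine function $\psi$ on $N$ with $d\psi=\theta$ coming from the parallel form. If that fails, I would change the base points $x_0,y_0$ to shift the ambiguity into $g_M,g_N$, or use positivity of $g$ along with compactness to show the symmetric mixed class is forced to vanish.

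\textbf{Step 3: conclusion.} Once $h=D\alpha'+D\beta$ with $h$ symmetric, set $\alpha:=\alpha'+\beta$. Then $\mathrm{Alt}(D\alpha)=\tfrac12 d\alpha=0$, so $\alpha$ is closed, and $g=g_M+g_N+D\alpha$, which is the claimed decomposition.
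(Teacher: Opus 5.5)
\textbf{The gap is Step 2, and it cannot be closed, because the statement is false.}

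Your Step 1 is sound. It is in fact more careful than the paper, which simply asserts that the $H^{1,1}(M)$ and $H^{1,1}(N)$ components of $[g]$ are represented by Hessian metrics; restricting $g$ to affine slices actually produces positive definite representatives. Your identifications of $H^{1,0}$ with de Rham $H^1$ and of $H^{0,1}$ with the parallel $1$-forms agree with $H^{p,q}=H^p(M,P^q)$.

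\textbf{Counterexample.} Take $M=N=\R/\Z$ with the standard flat structure, and on $T^2=M\times N$ the constant metric
\[
g=dx\otimes dx+\tfrac12\,(dx\otimes dy+dy\otimes dx)+dy\otimes dy ,
\]
which is Hessian since $Dg=0$. Any Hessian metrics $g_M,g_N$ pull back to $f(x)\,dx\otimes dx$ and $h(y)\,dy\otimes dy$. For $\alpha=u\,dx+v\,dy$, the coefficient of $dx\otimes dy$ in $D\alpha$ is $\d_x v$. So $g=g_M+g_N+D\alpha$ would force $\d_x v\equiv\tfrac12$, contradicting $\int_0^1\d_x v(x,y)\,dx=0$.

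Equivalently, the symmetric mixed class $\tfrac12\bigl([dx\otimes dy]+[dy\otimes dx]\bigr)\in H^{1,1}(T^2)$ is nonzero, and nothing of the form $g_M+g_N+D\alpha$ produces a mixed class. This is consistent with the paper's own torus computation, in which the four classes $[dx^i\otimes dx^j]$ are independent. The same metric on $S^1\times\R$ gives a counterexample with only one compact factor, since the argument uses periodicity in $x$ only.

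\textbf{Why your fallbacks cannot work.} The product $\phi\psi$ would be $xy$, which is not a function on $T^2$. Changing base points changes $g_M,g_N$, but their classes stay in $H^{1,1}(M)\oplus H^{1,1}(N)$, so the mixed component of $[g]$ is untouched. Positivity cannot kill the mixed class either, since the $g$ above is positive definite.

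\textbf{What your argument does prove.} When both factors are compact, your symmetry argument uses that parallel forms inject into $H^1_{dR}$ on both factors. It then places the mixed class in the symmetrized image of $P^1(M)\otimes P^1(N)$. The correct conclusion is therefore
\[
g=g_M+g_N+c+D\alpha ,
\]
with $\alpha$ closed and $c=\sum a_{kl}(\lambda_k\otimes\mu_l+\mu_l\otimes\lambda_k)$, where the $\lambda_k$ are parallel $1$-forms on $M$ and the $\mu_l$ are parallel $1$-forms on $N$. One can take $c=0$, for instance, when one factor has no nonzero parallel $1$-form, but not in general.

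\textbf{The paper's proof fails at the same point.} It shows that the mixed term $\eta_M\otimes\theta_N+\theta_N\otimes\eta_M$ has vanishing covariant derivative and concludes that this term is zero. But vanishing $D$ only means the term is parallel, and $\tfrac12(dx\otimes dy+dy\otimes dx)$ on $T^2$ is exactly such a nonzero parallel mixed term.
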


        The cohomology $H^{1,1}(M)$ of a compact flat affine manifold is zero if and only if the universal covering of $M$ is a domain in $\mathbb{R}^n$ without straight lines. Such  flat affine manifolds are called hyperbolic .

        \begin{cor}
            Let $M$ be a Hessian manifold and $N=C/\Gamma$ a compact hyperbolic flat affine manifold. Then any Hessian metric $g$ on $M\times N$ is equal to
            $$
        g=g_M +D\alpha,
            $$
            where $g_M$ is a Hessian metric on $M$ and $\alpha$ is a closed 1-form on $M\times N$.
        \end{cor}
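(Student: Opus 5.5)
We deduce the corollary from the preceding theorem together with Koszul's characterization of hyperbolicity recalled above. First, since $N$ is compact and hyperbolic, $H^{1,1}(N)=0$. Concretely, every $\d$-closed element of $\Lambda^{1,1}N$, and in particular every Hessian metric on $N$, is of the form $D\beta$ for some $1$-form $\beta$ on $N$. The form $\beta$ can be taken closed: the skew part of $D\beta$ is $d\beta$ (up to a constant), and $D\beta$ is symmetric, so $d\beta=0$. Also note that $N$ is a Hessian manifold, because on the convex domain $C$ without straight lines the Koszul canonical form gives a $\Gamma$-invariant Hessian metric. Hence the preceding theorem applies to $M\times N$, with $N$ the compact factor.

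Let $g$ be a Hessian metric on $M\times N$. By the preceding theorem,
\[
g=g_M+g_N+D\alpha_0,
\]
with $g_M$, $g_N$ Hessian metrics on $M$, $N$ and $\alpha_0$ a closed $1$-form on $M\times N$. Write $g_N=D\beta$ with $\beta$ a closed $1$-form on $N$. Its pullback $\pi_N^*\beta$ is a closed $1$-form on $M\times N$. Because the flat connection on the product is the product connection, $D(\pi_N^*\beta)=\pi_N^*(D\beta)=\pi_N^*g_N$; this can be checked in product affine coordinates. Setting $\alpha=\alpha_0+\pi_N^*\beta$ gives $g=g_M+D\alpha$ with $\alpha$ closed, which is the claim.

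The only delicate point is justifying $g_N=D\beta$ with $\beta$ a closed $1$-form rather than merely an arbitrary $1$-form. This follows from the symmetry argument above, using that $D$ is torsion-free. Alternatively, one can avoid the preceding theorem and apply the Künneth formula (Theorem~\ref{Kunnth}) directly. Since $H^{1,1}(N)=0$, the decomposition of $H^{1,1}(M\times N)$ reduces to
\[
H^{1,1}(M)\otimes H^{0,0}(N)\;\oplus\; H^{1,0}(M)\otimes H^{0,1}(N)\;\oplus\; H^{0,1}(M)\otimes H^{1,0}(N)\;\oplus\; H^{0,0}(M)\otimes H^{1,1}(N),
\]
in which the last summand vanishes. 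One would then still have to show that the mixed terms contribute nothing to symmetric classes. Since that is exactly what the preceding theorem already does, I would rely on it.
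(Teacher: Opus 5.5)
\textbf{Verdict: there is a genuine gap, and the statement as written (with $M$ not assumed compact) is false.}

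Your argument is the paper's own: invoke the product theorem, then absorb $g_N=D\beta$ using $H^{1,1}(N)=0$. The details you add (closedness of $\beta$, a Hessian metric on $N$, $D(\pi_N^*\beta)=\pi_N^*D\beta$) are fine. The gap is the step you delegate, and the paper's proof has the same defect.

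You correctly say the real content is that the mixed K\"unneth classes in $H^{1,0}(M)\otimes H^{0,1}(N)$ and $H^{0,1}(M)\otimes H^{1,0}(N)$ cannot occur in $[g]$. The product theorem does not establish this. Its proof in the paper only shows that $D$ of the mixed term vanishes, which does not make the term itself vanish. The theorem already fails on $T^1\times T^1$: the constant metric $dx\otimes dx+\tfrac12(dx\otimes dy+dy\otimes dx)+dy\otimes dy$ has $(\partial_y,\partial_x)$-component $\tfrac12$. For $g_M+g_N+D\alpha$ with $\alpha$ closed, that component is $\partial_y(\alpha(\partial_x))$, whose integral over $T^2$ is zero.

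For noncompact $M$ the corollary itself fails. Take $M=(0,1)$ with $dx\otimes dx$, $N=(0,\infty)/\langle y\mapsto 2y\rangle$, and $g=D^2\phi$ with $\phi=x^2+(x-2)\log y$.
In the invariant frame $\partial_x$, $E=y\partial_y$ the coefficients are $g(\partial_x,\partial_x)=2$, $g(\partial_x,E)=1$, $g(E,E)=2-x$. They depend only on $x$, so $g$ descends to $M\times N$. It is positive definite on $0<x<1$ since $2(2-x)>1$, and it is locally a Hessian, so it is a Hessian metric.

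Now suppose $g=\pi_M^*g_M+D\alpha$ for any $1$-form $\alpha$. Evaluate on $(E,\partial_x)$, with $E$ in the slot where $D$ differentiates. The $g_M$ term vanishes on this pair, and $\partial_x$ is parallel, so $E(\alpha(\partial_x))=1$. This is impossible, because the orbits of $E$ are the closed circles $\{x\}\times N$.
In K\"unneth terms, $[g]=[\tfrac{dy}{y}\otimes dx]$ is a nonzero class in $H^{0,1}(M)\otimes H^{1,0}(N)$. Its antisymmetric part $\tfrac{dy}{y}\wedge dx=-d(x\,\tfrac{dy}{y})$ is exact, so symmetry of $g$ puts no constraint on it. The same $g$ also refutes the product theorem with only one compact factor.

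\textbf{What can be salvaged.} The statement is true for compact $M$, but the missing step needs topological input, not symmetry of $D$.
\begin{itemize}
\item First, $H^{0,1}(N)=0$. By Vey, $\Gamma$ fixes the vertex of the cone $C$. So a parallel $1$-form on $N$ is the differential of a $\Gamma$-invariant linear function, hence exact. It then vanishes at a maximum of its primitive, and being parallel, vanishes everywhere.
\item Put $g_M=\iota_{y_0}^*g$ for a slice $M\times\{y_0\}$. Then $[g-\pi_M^*g_M]$ has no $H^{1,1}(M)$-component. It is represented by $\sum_l\eta_l\otimes\theta_l$, with $\theta_l$ linearly independent parallel $1$-forms on $M$ and $\eta_l$ closed on $N$.
\item Symmetry makes $\sum_l\eta_l\wedge\theta_l$ exact, so $\sum_l[\theta_l]\otimes[\eta_l]=0$ in $H^1(M)\otimes H^1(N)$.
\item On compact $M$, parallel $1$-forms inject into $H^1_{dR}(M)$ by the same maximum argument. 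Hence every $[\eta_l]=0$ and the mixed class vanishes.
\item Therefore $g-\pi_M^*g_M=D\alpha$, and $\alpha$ is closed by your symmetry argument.
\end{itemize}
The injectivity in the fourth step is exactly what fails in the counterexample, since $dx$ is exact on $(0,1)$.
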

        \begin{theorem}
            Let $T^n=\R^n/\Z^n$ be a flat torus. Then
            $$
            H^{*,*}(T^n)=\R[a^1,\ldots,a^n,b^1,\ldots,b^n]/_{\left(a^1\right)^{2},\ldots, \left(b^n\right)^{2}},
            $$
            where $a^i=[dx^i\otimes 1]\in H^{1,0} (T^n), b^i=[1\otimes dx^i]\in H^{0,1} (T^n)$.
        \end{theorem}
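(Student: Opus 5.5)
The plan is to reduce to the circle $S^1=\R/\Z$ and then apply the Künneth formula (Theorem~\ref{Kunnth}) inductively. The flat torus splits as $T^n=S^1\times\cdots\times S^1$. The standard flat connection on $T^n$ is the product of the standard flat connections on the factors, and every factor is compact, so Theorem~\ref{Kunnth} applies at each step. This gives
$$
H^{*,*}(T^n)\cong H^{*,*}(S^1)^{\otimes n}
$$
as bigraded vector spaces.

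\textbf{Step 1: the circle.} On $S^1$ with coordinate $x$, a form in $\Lambda^{p,q}S^1$ has $p,q\in\{0,1\}$. It can be written uniquely as $f\,(dx)^{\otimes p}\otimes(dx)^{\otimes q}$, where $f$ is a periodic function and $dx$ is $D$-parallel. The operator $\d$ maps $\Lambda^{p,0}\to\Lambda^{p,1}$ by $f\mapsto f'$, up to the fixed basis elements. Since $\Lambda^{p,q}=0$ for $q\ge2$, each complex is just $C^\infty(S^1)\xrightarrow{d/dx}C^\infty(S^1)$. Its kernel is the constants, and its cokernel is $\R$, detected by $\int_{S^1}f\,dx$. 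Hence $H^{p,q}(S^1)\cong\R$ for each of $(0,0),(1,0),(0,1),(1,1)$. The generators are $1$, $a=[dx\otimes1]$, $b=[1\otimes dx]$ and $[dx\otimes dx]$.

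\textbf{Step 2: the algebra structure.} $\Lambda^{*,*}M=\Lambda^*M\otimes\Lambda^*M$ carries the componentwise wedge product. I would check that $\d$ satisfies a graded Leibniz rule with respect to it. This follows because $\d$ is the exterior derivative twisted by the flat connection $D$ on $\Lambda^*M$, and $D$ is a derivation of the wedge product. So $H^{*,*}$ is a bigraded-commutative algebra, with signs coming from both degrees. On $S^1$ we have $a\cdot b=[dx\otimes dx]$, which generates $H^{1,1}(S^1)$, and $a^2=b^2=0$ because $dx\wedge dx=0$. Hence $H^{*,*}(S^1)=\R[a,b]/(a^2,b^2)$.

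\textbf{Step 3: compatibility with products.} The main obstacle is that Theorem~\ref{Kunnth}, as stated, only gives an isomorphism of vector spaces. For the ring statement I need the iso to be induced by the cross product $(\omega,\eta)\mapsto \pi_M^*\omega\wedge\pi_N^*\eta$. I would avoid relying on how the proof of Theorem~\ref{Kunnth} is set up. Instead I argue directly: the monomials $a^I b^J$, with $I,J\subset\{1,\dots,n\}$, are represented by the constant-coefficient forms $dx^I\otimes dx^J$, which are $\d$-closed. There are $4^n$ of them, which equals $\dim H^{*,*}(T^n)$ by the Künneth count. It therefore suffices to show they are linearly independent in cohomology.

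For independence I use averaging. Let $A(\omega)$ be the $T^n$-translation average of a form, that is, the projection onto constant coefficients. Translations preserve $D$, so $A$ commutes with $\d$. Moreover $\d$ of a constant-coefficient form is zero. Thus if $\sum c_{IJ}\,dx^I\otimes dx^J=\d\beta$, then applying $A$ gives $\sum c_{IJ}\,dx^I\otimes dx^J=\d A(\beta)=0$, so all $c_{IJ}=0$.

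This shows the monomials form a basis, and the products $a^I b^J$ are exactly these classes. The relations $(a^i)^2=(b^i)^2=0$, together with graded commutativity, then give the stated presentation. Here $\R[\ldots]$ is read as the graded-commutative polynomial ring, so that $a^i$ anticommute among themselves and $b^j$ anticommute among themselves.
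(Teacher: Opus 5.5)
Your proof is correct. It shares the paper's outer structure (reduce to $S^1$ via Künneth, then compute the four groups of the circle), but it handles the passage from $S^1$ to $T^n$ differently.

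\textbf{The paper's route.} The paper invokes Theorem~\ref{main} and says it suffices to show $H^{*,*}(T^1)=\R[a,b]/(a^2,b^2)$. It then identifies $H^{p,0}(T^1)$ and $H^{p,1}(T^1)$ with $H^p(T^1)$ via $[\alpha]\mapsto[\alpha\otimes1]$ and $[\alpha]\mapsto[\alpha\otimes dx]$. Here it tacitly uses that the Künneth isomorphism is induced by the cross product $(\omega,\eta)\mapsto\pi_M^*\omega\wedge\pi_N^*\eta$, so that generators and products on $T^1$ propagate to $T^n$. Neither Theorem~\ref{Kunnth} nor Theorem~\ref{main} is stated in that multiplicative form.

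\textbf{Your route.} You use Künneth only for the count $\dim H^{*,*}(T^n)=4^n$. You then show by translation-averaging that the closed constant-coefficient forms $dx^I\otimes dx^J$ are independent in cohomology, hence a basis. The algebra structure comes from the Leibniz rule $\d(\omega\wedge\eta)=\d\omega\wedge\eta+(-1)^p\omega\wedge\d\eta$.

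\textbf{Comparison.} The paper's argument is shorter. Yours actually justifies that the monomials $a^Ib^J$ are the classes $[dx^I\otimes dx^J]$. Your remark that $\R[\ldots]$ must be read as bigraded-commutative is a genuine correction to the statement. For $n\ge2$ one has $a^1a^2=-a^2a^1\neq0$ in $H^{2,0}(T^n)$, which rules out the literal commutative reading as a ring isomorphism.

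\textbf{One slip.} With the paper's convention $\d:\Lambda^{p,q}\to\Lambda^{p+1,q}$, the circle complexes are $\Lambda^{0,q}\to\Lambda^{1,q}$, and the relevant vanishing is $\Lambda^{p,q}=0$ for $p\ge2$. What you wrote ($\Lambda^{p,0}\to\Lambda^{p,1}$) describes $\bar\d$. This is harmless here, because on $S^1$ both $\d$ and $\bar\d$ reduce to $f\mapsto f'$.
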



                \begin{theorem}
           Let $(M,D)$ be a compact flat affine manifold that admits a $D$-flat Riemannian metric. Then any Hessian metric on $M$ is equal to 
            $$
            g=g_{\text{flat}} +D\alpha
            $$
            where $g_{\text{flat}}$ is a flat Riemannian metric on $M$ and $\alpha$ is a closed 1-form.
        \end{theorem}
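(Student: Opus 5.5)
Since $M$ is compact and carries a $D$-flat Riemannian metric, Bieberbach theory lets us assume $M=\R^n/\Gamma$, where $\Gamma$ is a crystallographic group of affine maps $x\mapsto Ax+b$ with $A\in O(n)$, and $D$ is the standard connection. A finite normal subgroup of translations $\Z^n\subset\Gamma$ gives a finite Galois covering $\pi:T^n\to M$ with group $G=\Gamma/\Z^n$; the action of $G$ preserves $D$, and its linear part acts on $\R^n$ through the holonomy representation $\rho:G\to O(n)$. The plan is to compute $H^{1,1}(M)$ as the $G$-invariant part of $H^{1,1}(T^n)$. Then we show that every class in $H^{1,1}(M)$ that contains a positive definite form is represented by a constant-coefficient, hence $D$-flat, Riemannian metric. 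Once this is done, the discussion in the introduction finishes the proof: two Hessian metrics in the same class differ by $D\alpha$, and we take $\alpha$ closed.

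The first step is the averaging argument. The complex $(\Lambda^{\bullet,q},\d)$ on $T^n$ is $G$-equivariant, and $G$ is finite over $\R$. Hence $H^{p,q}(M)=H^{p,q}(T^n)^G$: pullback is injective, and averaging a primitive over $G$ gives a $G$-invariant primitive. By the theorem on $H^{*,*}(T^n)$ above, $H^{1,1}(T^n)$ has basis $a^ib^j=[dx^i\otimes dx^j]$. So $H^{1,1}(T^n)\cong \R^{n*}\otimes\R^{n*}$, realised by constant forms $\sum c_{ij}dx^i\otimes dx^j$. The map from constant forms to cohomology is equivariant, since $G$ acts on constant forms through $\rho$. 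Therefore $H^{1,1}(M)\cong (\R^{n*}\otimes\R^{n*})^{\rho(G)}$, and every class is represented by a $\rho(G)$-invariant constant bilinear form $c$, which descends to $M$.

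Next, take a Hessian metric $g$ on $M$ with class represented by the constant invariant form $c$, so $\pi^*g-c=\d\beta$ on $T^n$. We must show that the symmetric part of $c$ is positive definite and that $c$ can be chosen symmetric. For the first point, average over the torus: translations $\tau_t$ act trivially on $H^{1,1}(T^n)$, because they are homotopic to the identity through affine maps. Thus $\bar g:=\int_{T^n}\tau_t^*\pi^*g\,dt$ is a constant positive definite symmetric form with the same class as $\pi^*g$. It is also $G$-invariant, since $G$ normalises the translations. Two constant forms in the same class coincide, because the classes $a^ib^j$ are linearly independent. Hence $c=\bar g$, which is symmetric and positive definite. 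This gives $\pi^*g=\bar g+\d\beta$, where $\d\beta=D\alpha$ for a $1$-form $\alpha$ (in $\Lambda^{0,1}$, $\d(1\otimes\alpha)=D\alpha$). Averaging $\alpha$ over $G$ makes it descend to $M$, with $g=g_{\text{flat}}+D\alpha$ and $g_{\text{flat}}$ the metric induced by $\bar g$. Since $g$ and $g_{\text{flat}}$ are symmetric, $D\alpha$ is symmetric, which means $d\alpha=0$.

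The main obstacle is the invariance step: checking that translation averaging preserves the class and that $G$-averaging commutes with everything. This needs the identification of the affine action on $H^{1,1}$ with $\rho\otimes\rho$ on constant forms, which follows because $D$-parallel forms pull back to constant forms under affine maps. In fact the torus-averaging argument works directly on $M$ as well, without going through the explicit cohomology computation, provided we know that constant forms inject into $H^{1,1}(T^n)$. That injectivity is exactly what the theorem on $H^{*,*}(T^n)$ provides.
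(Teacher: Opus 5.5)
Your proof is correct. It shares the paper's skeleton: the Bieberbach covering $\pi\colon T^n\to M$, a constant representative of $[\pi^*g]$ taken from the computation of $H^{*,*}(T^n)$, and averaging over the finite deck group to descend. The key step, showing the flat representative is a Riemannian metric, is done differently.

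\begin{itemize}
\item The paper proves positivity pointwise by a maximum principle. For a flat field $X$, at a maximum $p$ of $\tilde\alpha(X)$ one has $D\tilde\alpha(X,X)|_p=0$, so $\tilde g_{\text{flat}}(X,X)=\pi^*g(X,X)|_p>0$. It gets symmetry of $\tilde g_{\text{flat}}$ by asserting, without argument, that $D\tilde\alpha$ is symmetric, i.e.\ that $\tilde\alpha$ is closed.
\item You identify the constant representative explicitly as the translation average $\bar g$ of $\pi^*g$. This makes symmetry and positive definiteness immediate. Closedness of $\alpha$ then follows from the symmetry of $D\alpha=\pi^*g-\bar g$. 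The $G$-invariance of $\bar g$ comes for free, since constant representatives are unique.
\end{itemize}

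Your argument is essentially the integrated version of the paper's maximum argument, and it closes the symmetry point the paper passes over. Your identification $H^{1,1}(M)\cong(\R^{n*}\otimes\R^{n*})^{\rho(G)}$ is correct but not needed.

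The step ``translations act trivially, thus $\bar g$ has the same class'' is best justified directly. Write $\pi^*g=c+\d(1\otimes\beta)$ with $c$ constant and average over translations. The average of $1\otimes\beta$ is a constant form, which $\d$ kills, so $\bar g=c$ at once.

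One minor slip: the translation lattice is a normal subgroup of finite index in $\Gamma$, not a finite subgroup.
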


        \section{Dolbeault-Koszul cohomology}
Let $(M,D)$ be a flat affine manifold. Denote 
$$
\Lambda^{p,q}M=\Lambda^{p}M\otimes \Lambda^qM.
$$
The connection $D$ induces a connection on   $\Lambda^{p,q}M$.
$$
D : \Lambda^{p,q}M \to \Lambda^1 M\otimes \Lambda^{p,q}M. 
$$
Let
$$
e : \Lambda^1 M\otimes \Lambda^{p,q}M\to \Lambda^{p+1,q}M, \ \ \ e(\eta\otimes\alpha\otimes\beta)=(\eta\wedge \alpha)\otimes\beta
$$
and
$$
\bar e : \Lambda^1 M\otimes \Lambda^{p,q}M \to \Lambda^{p,q+1}M  \ \ \ \bar e(\eta\otimes\alpha\otimes\beta)= \alpha\otimes(\eta\wedge \beta)
$$
the exterior multiplication on the first and second factors, respectively.

Denote 
$$
\d=e\circ D : \Lambda^{p,q}M\to \Lambda^{p+1,q}M, \ \ \ \bar \d=\bar e\circ D : \Lambda^{p,q}M\to \Lambda^{p,q+1}M
$$

Let $x^1,\ldots, x^n$ be flat coordinates
where $f_i$ are smooth functions. Then 
$$
\d\left(\sum_{I,J} f_{I,J} dx^I\otimes dx^J\right)=\sum_{I,J} (df_{I,J}\wedge dx^I) \otimes dx^J
$$
$$
\bar \d\left(\sum_{I,J} f_{I,J} dx^I\otimes dx^J\right)=\sum_{I,J}  dx^I \otimes (df_{I,J}\wedge dx^J)
$$
It is easy to check that $\d^2=0, \bar \d^2=0$.
\begin{defin}
    We define the $(p,q)$-cohomology $H^{p,q}(M)$ of a flat affine manifold $(M,D)$ by
    $$
    H^{p,q}(M)=\frac{\operatorname{ker} \left( \d :\Lambda^{p,q}M\to \Lambda^{p+1,q}M\right)}{\operatorname{im} \left( \d :\Lambda^{p-1,q}M\to \Lambda^{p,q}M\right)}.
    $$
\end{defin}
Denote by $P^q$ the sheaf of flat $q$-forms.
\begin{theorem}[\cite{Sh2}]
    We have $H^{p,q}(M)=H^p(M,P^q)$
\end{theorem}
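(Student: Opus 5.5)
The plan is to show that, for fixed $q$, the complex $(\Lambda^{\bullet,q}M,\d)$ is a fine resolution of the sheaf $P^q$ of flat $q$-forms, and then to apply the abstract de Rham theorem.

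First, observe that $\d$ acts only on the first factor and treats the second factor as a flat coefficient bundle. Concretely, $\Lambda^{p,q}M$ is the space of $p$-forms with values in the flat bundle $(\Lambda^qM,D)$, and $\d$ is the exterior covariant derivative $d^D$ of that flat bundle. Flatness of $D$ gives $(d^D)^2=0$, which the paper has already noted.

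Second, identify the kernel of $\d$ in degree zero. An element $s\in\Lambda^{0,q}M=\Lambda^qM$ satisfies $\d s=0$ exactly when $Ds=0$. In flat coordinates, with $s=\sum_J f_J\,dx^J$, this means every $df_J=0$, so the $f_J$ are locally constant. Hence $\ker(\d:\Lambda^{0,q}\to\Lambda^{1,q})$ is precisely the sheaf $P^q$.

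Third, prove local exactness in positive degrees, which is the Poincaré lemma. On a flat coordinate ball $U$, the bundle $\Lambda^qM$ is trivialized by the parallel frame $\{dx^J\}$. In this frame $\d\bigl(\sum_J\omega_J\otimes dx^J\bigr)=\sum_J d\omega_J\otimes dx^J$, so the complex $(\Lambda^{\bullet,q}(U),\d)$ is a direct sum of $\binom{n}{q}$ copies of the ordinary de Rham complex of $U$. The usual Poincaré lemma, applied componentwise, then gives exactness in degrees $p\ge1$ on balls. Together with the second step, this shows that
$$
0\to P^q\to \Lambda^{0,q}\xrightarrow{\d}\Lambda^{1,q}\xrightarrow{\d}\cdots
$$
is an exact sequence of sheaves.

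Finally, each sheaf $\Lambda^{p,q}$ is a sheaf of modules over smooth functions. It therefore admits partitions of unity, so it is fine and hence acyclic. The abstract de Rham theorem, that the cohomology of global sections of an acyclic resolution computes sheaf cohomology, then yields $H^{p,q}(M)=H^p(M,P^q)$.

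No step is a genuine obstacle. The one point needing care is the third step: one must check that $\d$ really acts as $d$ on each coefficient in the parallel frame. This holds because $D(dx^J)=0$ in flat coordinates, which is exactly the local formula for $\d$ given in the paper.
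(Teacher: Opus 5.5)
Your proposal is correct and follows the same route as the paper, which simply asserts that $0\to P^q\to\Lambda^{0,q}\xrightarrow{\d}\Lambda^{1,q}\to\cdots$ is a fine resolution and then applies the abstract de Rham theorem. You fill in the details the paper leaves implicit, and all of them are sound: the identification of $\ker \d$ in degree zero with $P^q$, the componentwise Poincar\'e lemma in a parallel frame, and fineness via partitions of unity.
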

\begin{proof}
    The sequence
    $$
    0\to P^q\xrightarrow{} \Lambda^{0,q}\xrightarrow{\d} \Lambda^{1,q}\xrightarrow{\d}\Lambda^{2,q}\xrightarrow{\d}\ldots
    $$
    is a fine resolution of $P^q$. Therefore, $H^{p,q}(M)=H^p(M,P^q)$.
\end{proof}
\section{K\"unneth formula for Dolbeault–Koszul cohomology}
\begin{theorem}[\cite{D}, Chapter IV, Theorem 15.10]\label{ThSHK}
    Let $\mathcal{A}$ and $\mathcal{B}$ be sheaves of $R$-modules over topological spaces $X$ and $Y$. Assume that $\mathcal{A}$ is torsion free, that $Y$ is compact and that either $X$ is compact or the cohomology groups $H^q(Y, \mathcal{B})$ are finitely generated $R$-modules. There is a split exact sequence
\[
0 \to \bigoplus_{p+q=l} H^p(X, \mathcal{A}) \otimes H^q(Y, \mathcal{B}) \xrightarrow{\mu} H^l(X \times Y, \pi_1^*\mathcal{A}\otimes\pi_2^*\mathcal{B})
\]
\[
\to \bigoplus_{p+q=l+1}  \text{Tor}_1 \left(H^p(X, \mathcal{A}),(H^q(Y, \mathcal{B})\right) \to 0
\]
where $\mu$ is the map given by the cartesian product $\bigoplus \alpha_p \otimes \beta_q \mapsto \sum \alpha_p \times \beta_q$.
\end{theorem}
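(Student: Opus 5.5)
The plan is to reduce the statement to the algebraic Künneth theorem for tensor products of chain complexes over a principal ideal domain $R$, which is presumably the standing hypothesis on $R$ in \cite{D}. The reduction has three steps: choose resolutions of $\mathcal A$ and $\mathcal B$ whose global section complexes are torsion free, show that the external tensor product of these resolutions computes $H^*(X\times Y,\pi_1^*\mathcal A\otimes\pi_2^*\mathcal B)$, and identify its sections with the tensor product of section complexes.

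\emph{Step 1: resolutions.} I would take the canonical Godement-type resolutions $\mathcal A\to\mathcal A^\bullet$ and $\mathcal B\to\mathcal B^\bullet$. A better choice is a soft resolution built from sheaves of (semi)singular cochains with coefficients, because its section modules are torsion free whenever the coefficient sheaf is. Since $\mathcal A$ is torsion free, tensoring with $\mathcal A$ over a PID is exact. Hence the total complex of $\pi_1^*\mathcal A^\bullet\otimes\pi_2^*\mathcal B^\bullet$ is a resolution of $\pi_1^*\mathcal A\otimes\pi_2^*\mathcal B$. This follows from a stalkwise spectral-sequence or filtration argument: the stalks are $\mathcal A_x\otimes\mathcal B_y$, and exactness is checked by tensoring exact sequences with flat modules. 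Next I would check that this tensor resolution is acyclic on $X\times Y$, for instance soft or $\Phi$-soft. A tensor product of a soft sheaf with a torsion-free sheaf over a paracompact space is soft, so acyclicity should follow, using paracompactness (or compactness) of $X\times Y$.

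\emph{Step 2: comparison of sections.} This is the main obstacle. One needs the natural map
$$\Gamma(X,\mathcal A^\bullet)\otimes\Gamma(Y,\mathcal B^\bullet)\to\Gamma(X\times Y,\pi_1^*\mathcal A^\bullet\otimes\pi_2^*\mathcal B^\bullet)$$
to be a quasi-isomorphism. It is not an isomorphism in general, and this is exactly where compactness of $Y$ enters. My first attempt would be a Mayer--Vietoris / finite-cover argument on $Y$: by compactness every section over $X\times Y$ is locally, over a finite cover $\{V_k\}$ of $Y$, a finite sum $\sum s_i\otimes t_i$. So $\Gamma(X\times Y,\cdot)$ is a colimit over finite covers of Čech-type double complexes. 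On each $X\times V$ with $V$ small, the two sides agree up to homotopy by a direct-limit argument over neighborhoods. The extra hypothesis, that $X$ is compact or each $H^q(Y,\mathcal B)$ is finitely generated, is used to commute $\Gamma(X,\cdot)$ or cohomology with the relevant tensor products and direct limits. In the finitely generated case, I would replace $\Gamma(Y,\mathcal B^\bullet)$ by a quasi-isomorphic complex of finitely generated free modules, which tensors well with sections over the noncompact $X$.

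\emph{Step 3: algebra.} Once $H^l(X\times Y,\pi_1^*\mathcal A\otimes\pi_2^*\mathcal B)\cong H^l\bigl(\Gamma(X,\mathcal A^\bullet)\otimes\Gamma(Y,\mathcal B^\bullet)\bigr)$, with the first complex torsion free (hence flat over the PID $R$), the algebraic Künneth theorem gives the split short exact sequence with $\operatorname{Tor}_1$ terms. Under the identifications, the first map is the cross product $\mu$, and naturality of all the constructions makes this identification routine.
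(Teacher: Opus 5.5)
The paper gives no proof of this statement; it is quoted from \cite{D}. So the only question is whether your outline amounts to a proof. Your overall frame is the right one: torsion-free resolutions, then the algebraic K\"unneth theorem over a PID. Step 3 is fine. The trouble is that neither place where the hypotheses actually enter is handled.

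In Step 1, the lemma ``soft $\otimes$ torsion-free is soft'' needs one factor to be soft on $X\times Y$. Neither $\pi_1^*\mathcal A^i$ nor $\pi_2^*\mathcal B^j$ is soft there: for $X$ a point, $\pi_1^*\mathcal A^i$ is just a constant sheaf on $Y$. Separately, singular cochains give a resolution only on locally contractible spaces. Godement sections of a torsion-free sheaf are already torsion free, so the Godement resolution is the one to use.

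The more serious gap is Step 2, which is the entire content of the theorem, and the mechanism you sketch cannot work. If you cover $Y$ by small open sets $V$ and compare on $X\times V$, you throw away compactness of the second factor, which is exactly what makes the comparison true. Without it the comparison fails even for constant sheaves. Take $X=\mathbb N\cup\{\infty\}$, the one-point compactification, and $V=\mathbb N$ discrete. Then $H^0(X\times V,\mathbb Z)=\prod_{\mathbb N}H^0(X,\mathbb Z)$, which is strictly larger than $H^0(X,\mathbb Z)\otimes\prod_{\mathbb N}\mathbb Z$, even though all higher cohomology and all $\operatorname{Tor}$ terms vanish. The phrase ``a direct-limit argument over neighborhoods'' does not say which limit is taken or why it commutes with $\otimes$.

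The missing idea is to use compactness of $Y$ fibrewise over $X$, along the proper projection $\pi_1$, not locally on $Y$.

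\begin{itemize}
\item Resolve only $\mathcal B$, by Godement sheaves $\mathcal B^\bullet$. Since $\mathcal A$ is flat, $\pi_1^*\mathcal A\otimes\pi_2^*\mathcal B^\bullet$ resolves $\mathcal F=\pi_1^*\mathcal A\otimes\pi_2^*\mathcal B$.
\item Combine three facts: the tube lemma; tautness of the compact fibre $\{x\}\times Y$; and the fact that sections and cohomology over a compact space commute with filtered colimits (write $\mathcal A_x$ as a filtered colimit of finitely generated free modules). Together they give
\[
\bigl(\pi_{1*}(\pi_1^*\mathcal A\otimes\pi_2^*\mathcal B^j)\bigr)_x=\Gamma(Y,\mathcal A_x\otimes\mathcal B^j)=\mathcal A_x\otimes\Gamma(Y,\mathcal B^j),
\]
and the higher direct images vanish.
\item Hence $H^l(X\times Y,\mathcal F)\cong\mathbb H^l(X,\mathcal A\otimes K)$, where $K=\Gamma(Y,\mathcal B^\bullet)$ is viewed as a constant complex with $H^q(K)=H^q(Y,\mathcal B)$. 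This is the only place compactness of $Y$ is needed.
\item The other hypothesis now enters as a universal-coefficient step on $X$, and here your remark about finite-type free complexes is the right ingredient. Replace $K$ by a quasi-isomorphic complex $P$ of free modules, of finite type in each degree when the $H^q(Y,\mathcal B)$ are finitely generated.
\item Then $\Gamma(X,\mathcal A^\bullet)\otimes P\to\Gamma(X,\mathcal A^\bullet\otimes P)$ is an isomorphism, and the target computes $\mathbb H^*(X,\mathcal A\otimes P)$. This holds either because each $P^j$ is a finite sum, or because sections and cohomology over a compact $X$ commute with direct sums.
\end{itemize}

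Only at this point does your Step 3 apply.
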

\begin{cor}\label{sheafkunneth}
    Let $\mathcal{A}$ and $\mathcal{B}$ be locally constant sheaves of vector spaces over manifolds $M$ and $N$, $\pi_1$ and $\pi_2$ be the projections of $M\times N$ on $M$ and $N$. If at least one of the manifolds $M,N$ is compact then we have
        $$
        H^p(M\times N,\pi_1^*\mathcal{A}\otimes\pi_2^*\mathcal{B} )=\bigoplus_{0\le i\le p} H^{i}(M,\mathcal{A})\otimes H^{p-i}(N,\mathcal B).
        $$
\end{cor}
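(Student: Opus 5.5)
The plan is to deduce Corollary \ref{sheafkunneth} directly from Theorem \ref{ThSHK} with $R=\R$. Without loss of generality I assume $N$ is compact; otherwise I swap the roles of the factors. I take $X=M$, $Y=N$, $\mathcal{A}$, $\mathcal{B}$ as given. The sheaf product $\pi_1^*\mathcal{A}\otimes\pi_2^*\mathcal{B}$ is symmetric under the swap $M\times N\cong N\times M$, and so is the right-hand side up to reindexing. Since $R=\R$ is a field, every $\R$-module is free. Hence $\mathcal{A}$ is torsion free, and all $\operatorname{Tor}_1$ terms over $\R$ vanish. The split exact sequence then collapses to the statement that $\mu$ is an isomorphism, which is exactly the claimed formula.

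The one hypothesis needing care is the alternative in Theorem \ref{ThSHK}: either $X$ is compact, or the groups $H^q(Y,\mathcal{B})$ are finitely generated. If both manifolds are compact, the first alternative holds. If only $N$ is compact, I must show that $H^q(N,\mathcal{B})$ is finite dimensional for a locally constant sheaf $\mathcal{B}$ on the compact manifold $N$. This is the main step, though it is standard.

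In the application the stalks of $P^q$ are finite dimensional, so I add that assumption implicitly. I choose a finite good cover of $N$ by geodesically convex balls for some Riemannian metric. All nonempty finite intersections are contractible, so $\mathcal{B}$ is constant on each of them and has vanishing higher cohomology there. By Leray's theorem, sheaf cohomology of $N$ equals the Čech cohomology of this finite cover. The Čech complex is finite dimensional, because there are finitely many intersections, each carrying a finite dimensional space of sections. Therefore $H^q(N,\mathcal{B})$ is finite dimensional, which gives the second alternative.

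Finally I note that in Theorem \ref{ThSHK} the sum over $p+q=l$ is indexed over nonnegative degrees. It therefore matches $\bigoplus_{0\le i\le p}H^i(M,\mathcal{A})\otimes H^{p-i}(N,\mathcal{B})$. The main obstacle is only the finiteness step, which requires finite-dimensional stalks, a hypothesis I make explicit. The rest is a direct specialization of Theorem \ref{ThSHK}.
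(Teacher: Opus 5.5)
Your proposal is correct and matches the paper's intended argument. The paper gives no separate proof; it treats the corollary as Theorem \ref{ThSHK} over the field $R=\R$, where torsion-freeness is automatic and the $\operatorname{Tor}_1$ terms vanish.

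What you add is the check, omitted in the paper, of the finiteness alternative when only one factor is compact. Your finite good-cover argument handles it. Your extra assumption of finite-dimensional stalks (true for $P^q$) is genuinely needed. Take $\mathcal{A}=\R$ and $\mathcal{B}$ constant with infinite-dimensional stalk $V$ on a compact connected $N$, with $\dim H^1(M)=\infty$. Then the image of $\mu$ in the summand $\operatorname{Hom}(H_1(M),V)$ consists only of maps with finite-dimensional span, so $\mu$ is not surjective.
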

\begin{proof}[Proof of Theorem \ref{Kunnth}]
Sheaves of flat differential forms are locally constant sheaves of vector spaces. Therefore, we can apply Theorem \ref{sheafkunneth}
\begin{multline*}
H^p(M\times N,P^q)=H^p(M\times N,\bigoplus_{0\le j\le n} \left(\pi_1^*P^j\otimes\pi_2^*P^{q-j})\right)\\=\bigoplus_{0\le i\le p, 0\le j\le q} H^{i,j}(M)\otimes H^{p-i,q-j}(N).
\end{multline*}

\end{proof}

    

\section{Hodge theory}
Consider a Riemannian metric $g$ on a flat affine manifold $(M,D)$. Then $g$ induces a Riemannian metric on $(\Lambda^1 M)^{\otimes k}$ by the rule 
$$
g(\alpha_1\otimes\ldots \otimes \alpha_k,\beta_1\otimes\ldots\otimes \beta_k)=g(\alpha_1^\sharp,\beta_1^\sharp)\ldots g(\alpha_k^\sharp,\beta_k^\sharp).
$$
The space $\Lambda^kM$  is canonically embedded into $(\Lambda^1M )^{\otimes{k}}$ as the subspace  of alternating tensors. Therefore, we can consider $\Lambda^{p,q}M$ as a subspace of $(\Lambda^{1}M)^{\otimes{p+q}}$
Hence, $g$ induces a metric on $\Lambda^{p,q}M$.  

For decomposable elements of $\Lambda^{*,*}M$, set
$$(\alpha_1\otimes\beta_1)\wedge (\alpha_2\otimes\beta_2) :=(\alpha_1\wedge\alpha_2)\otimes(\beta_1\wedge\beta_2).
$$
This extends uniquely by $C^\infty(M)$-bilinearity to a product on $\Lambda^{*,*}M$.

Define an operator $$\star: \Lambda^{p,q} M\to \Lambda^{n-p,n-q}M.$$
by
$$
\star\left(\sum_i\alpha_i\otimes\beta_i\right)=\sum_i *\alpha_i\otimes *\beta_i,
$$
where $*$ is the Hodge star. Since $*$ is linear over smooth functions, the operator $\star$ is uniquely  defined.
\begin{proposition}
    The operator $\star$ is the unique $C^\infty(M)$-linear operator 
    $
\Lambda^{p,q} M\to \Lambda^{n-p,n-q}M
    $
    such that for any $\omega,\eta\in\Lambda^{p,q}M$ we have 
    $$
    \omega\wedge \star\eta=g(\omega,\eta)\operatorname{vol}_g\otimes \operatorname{vol}_g.
    $$
\end{proposition}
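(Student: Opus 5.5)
The plan is to reduce the statement to the classical identity for the Hodge star, $\alpha\wedge *\beta=g(\alpha,\beta)\operatorname{vol}_g$ for $\alpha,\beta\in\Lambda^kM$. Uniqueness is the easier half, so I treat it first.

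\emph{Uniqueness.} Suppose $\star'$ is another $C^\infty(M)$-linear operator with the same property. Then $\omega\wedge(\star\eta-\star'\eta)=0$ for all $\omega\in\Lambda^{p,q}M$. Since both operators are $C^\infty(M)$-linear, this can be checked pointwise. The pairing
$$\Lambda^{p,q}_x\times\Lambda^{n-p,n-q}_x\to\Lambda^{n,n}_x\cong\R$$
given by $\wedge$ is the tensor product of the two pairings $\Lambda^p_x\times\Lambda^{n-p}_x\to\Lambda^n_x$ and $\Lambda^q_x\times\Lambda^{n-q}_x\to\Lambda^n_x$. Each of these is nondegenerate, so their tensor product is nondegenerate as well. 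Hence $\star\eta=\star'\eta$.

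\emph{Existence.} The operator $\star$ defined above is $C^\infty(M)$-linear, and both sides of the identity are $C^\infty(M)$-bilinear in $(\omega,\eta)$. It is therefore enough to check decomposable elements $\omega=\alpha_1\otimes\beta_1$ and $\eta=\alpha_2\otimes\beta_2$, where $\alpha_i\in\Lambda^pM$ and $\beta_i\in\Lambda^qM$. By the definition of the product,
$$\omega\wedge\star\eta=(\alpha_1\wedge *\alpha_2)\otimes(\beta_1\wedge *\beta_2)=g(\alpha_1,\alpha_2)g(\beta_1,\beta_2)\operatorname{vol}_g\otimes\operatorname{vol}_g.$$
It remains to show that $g(\alpha_1\otimes\beta_1,\alpha_2\otimes\beta_2)=g(\alpha_1,\alpha_2)g(\beta_1,\beta_2)$ for the metric induced on $\Lambda^{p,q}M\subset(\Lambda^1M)^{\otimes(p+q)}$.

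\emph{Main obstacle: normalization.} The metric on the product factorizes, since the metric on $(\Lambda^1)^{\otimes(p+q)}$ is the product metric on $(\Lambda^1)^{\otimes p}\otimes(\Lambda^1)^{\otimes q}$. The real issue is that the metric on $\Lambda^kM$ obtained from the alternating embedding into $(\Lambda^1M)^{\otimes k}$ must coincide with the metric for which $*$ satisfies $\alpha\wedge*\beta=g(\alpha,\beta)\operatorname{vol}_g$. Depending on the convention for the embedding, the two can differ by a factor like $k!$. I would fix the convention for the embedding $\Lambda^k\hookrightarrow(\Lambda^1)^{\otimes k}$ so that an orthonormal coframe $e^1,\ldots,e^n$ yields orthonormal $e^I$, for example
$$e^{i_1}\wedge\cdots\wedge e^{i_k}\mapsto\frac{1}{\sqrt{k!}}\sum_\sigma \operatorname{sgn}(\sigma)\,e^{i_{\sigma(1)}}\otimes\cdots\otimes e^{i_{\sigma(k)}}.$$
Alternatively, I would interpret the statement up to this standard normalization. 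Then the check reduces to the orthonormal basis $e^I\otimes e^J$, where $\star(e^I\otimes e^J)=\pm e^{I^c}\otimes\pm e^{J^c}$. The identity then follows from the classical computation $e^I\wedge *e^{I'}=\delta_{II'}\operatorname{vol}_g$ applied in each factor separately.
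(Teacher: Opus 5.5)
Your proof is correct and follows essentially the same route as the paper: uniqueness comes from $\omega\wedge(\star-\star')\eta=0$ for all $\omega$, and existence is a pointwise check on the orthonormal basis $e^I\otimes e^J$, where $\star$ acts by signs onto complementary multi-indices. You also justify the nondegeneracy of the wedge pairing, which the paper leaves implicit, and you correctly flag something the paper glosses over: its claim that $\{\alpha_I\otimes\alpha_J\}$ is orthonormal depends on normalizing the embedding $\Lambda^k\hookrightarrow(\Lambda^1)^{\otimes k}$, and without that normalization the identity holds only up to a convention-dependent factor such as $p!\,q!$ or its inverse.
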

\begin{proof}
    If two operators $\star$ and $\star'$ satisfy the required condition, then for any $\omega,\eta\in\Lambda^{p,q}M$ we have $\omega\wedge (\star-\star')\eta=0$. Therefore, $\star=\star'$.

  It is enough to check that $\star$ satisfies the identity at each  fiber $\Lambda_x^{p,q}M$, ${x\in M}$. Let $\alpha_1,\ldots, \alpha_n$ be an orthonormal basis of $\Lambda^1_x M$. Then
    $$
    {\{\alpha_{I}\otimes \alpha_J\}_{|I|=p,|J|=q} }
    $$
    is an orthonormal basis of $\Lambda_x^{p,q}M, x\in M$. In this basis, $\star$ is expressed as
    $$
    \star(\alpha_{I}\otimes \alpha_J)={\operatorname{sgn}(I,I')\operatorname{sgn}(J,J')}\alpha_{I'}\otimes \alpha_{J'}
    $$
    where $I'$ and $J'$ are complements of $I$ and 
$J$ in $(1,\ldots ,n)$. Then it is easy to see that  $\star$  satisfies the required condition.
\end{proof}

Denote 
$$
K=\Lambda^n M.
$$
Define two $\C^\infty(M)$-linear isomorphisms 
$
\kappa: \Lambda^{p,q}M\to \Lambda^{p,q}M\otimes K^*
$
and ${\mu: \Lambda^{p,n}M\otimes K^*\to \Lambda^pM}$ by
$$
\kappa(\omega)=\omega \otimes \text{vol}_g^*, 
$$
$$
\ \ \ \mu ((\alpha\otimes \text{vol}_g)\otimes \text{vol}_g^*)=\alpha.
$$
for any $\omega\in \Lambda^{p,q}M$, $\alpha\in \Lambda^pM$. 

Denote
$$
\tilde \star=\kappa \circ\star: \Lambda^{p,q}M\to \Lambda^{n-p,n-q}M\otimes K^*.
$$

\begin{proposition}[\cite{Sh2},\cite{Sh3}]
    For any $\omega,\eta \in \Lambda^{p,q}M$ we have:
    $$
    g_{L^2}(\omega, \eta)=\int_M \mu(\omega\wedge \tilde \star\eta).
    $$
\end{proposition}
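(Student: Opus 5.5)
The identity is pointwise, so I would prove the fiberwise statement
$$
\mu(\omega\wedge\tilde\star\eta)=g(\omega,\eta)\,\operatorname{vol}_g
$$
as an equality of $n$-forms, and then integrate. Integrating gives $\int_M g(\omega,\eta)\operatorname{vol}_g$, which is by definition $g_{L^2}(\omega,\eta)$. (We assume the integral makes sense, e.g.\ $M$ compact or the forms compactly supported or square-integrable; $M$ should be oriented, or the argument should be read with $\operatorname{vol}_g$ as a density and local orientations, which changes nothing since all maps below are $C^\infty(M)$-linear.)

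First I need to fix how the wedge product acts on $\Lambda^{*,*}M\otimes K^*$. The natural convention is that $\omega\wedge(\xi\otimes\theta)=(\omega\wedge\xi)\otimes\theta$ for $\xi\in\Lambda^{*,*}M$ and $\theta\in K^*$. This is $C^\infty(M)$-bilinear, so it is well defined. With this convention,
$$
\omega\wedge\tilde\star\eta=\omega\wedge(\star\eta\otimes\operatorname{vol}_g^*)=(\omega\wedge\star\eta)\otimes\operatorname{vol}_g^*.
$$

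Next I apply the previous Proposition, which gives $\omega\wedge\star\eta=g(\omega,\eta)\operatorname{vol}_g\otimes\operatorname{vol}_g$. Substituting,
$$
\omega\wedge\tilde\star\eta=\bigl(g(\omega,\eta)\operatorname{vol}_g\otimes\operatorname{vol}_g\bigr)\otimes\operatorname{vol}_g^*,
$$
which lies in $\Lambda^{n,n}M\otimes K^*$. By the definition of $\mu$ with $\alpha=g(\omega,\eta)\operatorname{vol}_g\in\Lambda^nM$, this maps to $g(\omega,\eta)\operatorname{vol}_g$.

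The only delicate point is well-definedness and independence of choices. Here $\operatorname{vol}_g^*$ is the dual section with $\operatorname{vol}_g^*(\operatorname{vol}_g)=1$. I should check that $\mu$, defined on elements of the form $(\alpha\otimes\operatorname{vol}_g)\otimes\operatorname{vol}_g^*$, is the contraction of the second factor $K=\Lambda^nM$ with $K^*$. That is clear, because every element of $\Lambda^{p,n}M\otimes K^*$ has this form uniquely, since $\Lambda^nM$ is a line bundle. There is no real obstacle: the statement reduces to the previous Proposition plus unwinding the definitions of $\kappa$ and $\mu$.
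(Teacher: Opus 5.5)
Your proof is correct and follows the route the paper intends. The paper quotes this result from Shima without writing out a proof, but places it directly after the identity $\omega\wedge\star\eta=g(\omega,\eta)\operatorname{vol}_g\otimes\operatorname{vol}_g$ so that it follows exactly as you argue: unwind $\kappa$ and $\mu$ pointwise, then integrate. The orientation and integrability caveats you add are tacitly assumed in the paper as well.
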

    

\begin{proposition}[\cite{Sh2},\cite{Sh3}]
   Let $\d^*$ and $\bar \d ^*$ be the adjoint operators  to $\d$ and $\bar \d$. On the space $\Lambda^{p,q}M$ we have
    $$
    \d^*=(-1)^p\tilde\star^{-1}\d \tilde\star, \ \ \ \ \ \ \bar \d^*=(-1)^q\tilde\star^{-1}\bar \d \tilde\star.
    $$
    
\end{proposition}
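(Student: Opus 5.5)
The plan is to verify the formula for $\d^*$ by an integration-by-parts computation, and then obtain the one for $\bar\d^*$ by exchanging the roles of the two tensor factors. Throughout, $M$ is taken compact, or the forms compactly supported, so that Stokes' theorem holds without boundary terms. Fix $\omega\in\Lambda^{p-1,q}M$ and $\eta\in\Lambda^{p,q}M$. Then $\tilde\star\eta\in\Lambda^{n-p,n-q}M\otimes K^*$, so
$$\omega\wedge\tilde\star\eta\in\Lambda^{n-1,n}M\otimes K^*,\qquad \mu(\omega\wedge\tilde\star\eta)\in\Lambda^{n-1}M.$$
The aim is to show that
$$d\,\mu(\omega\wedge\tilde\star\eta)=\mu(\d\omega\wedge\tilde\star\eta)+(-1)^{p-1}\mu(\omega\wedge\d\tilde\star\eta).$$
Integrating over $M$ then gives
$$\int_M\mu(\d\omega\wedge\tilde\star\eta)=(-1)^p\int_M\mu(\omega\wedge\d\tilde\star\eta).$$

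To make sense of $\d$ on $\Lambda^{*,*}M\otimes K^*$, I extend it as $e\circ D$, using the flat connection that $D$ induces on $K^*$. Two facts are then needed. The first is the Leibniz rule
$$\d(\omega\wedge\theta)=\d\omega\wedge\theta+(-1)^{p-1}\omega\wedge\d\theta$$
for $\omega$ of first degree $p-1$. The sign arises only from commuting the differential past the first-factor degree; the second factors are simply wedged together and carried along. The second fact is $\mu\circ\d=d\circ\mu$ on $\Lambda^{n-1,n}M\otimes K^*$. I would check both in flat coordinates $x^1,\dots,x^n$. There $dx^1\wedge\dots\wedge dx^n$ and its dual are $D$-parallel, and
$$\operatorname{vol}_g=\sqrt{\det g}\,dx^1\wedge\dots\wedge dx^n,\qquad \operatorname{vol}_g^*=(\sqrt{\det g})^{-1}(dx^1\wedge\dots\wedge dx^n)^*.$$
Hence $\operatorname{vol}_g\otimes\operatorname{vol}_g^*$ is the parallel section $(dx)\otimes(dx)^*$, and $\mu$ just strips off this parallel factor. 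In coordinates $\d$ acts by differentiating coefficients and wedging on the left, so both identities become the ordinary Leibniz rule for $d$. I expect this bookkeeping, in particular confirming that the $\sqrt{\det g}$ factors cancel and that no curvature-like terms appear, to be the only real obstacle. Flatness of $D$ is exactly what makes it work.

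Next, since $\d\tilde\star\eta\in\Lambda^{n-p+1,n-q}M\otimes K^*$ and $\tilde\star$ is an isomorphism, write $\d\tilde\star\eta=\tilde\star\big(\tilde\star^{-1}\d\tilde\star\eta\big)$ with $\tilde\star^{-1}\d\tilde\star\eta\in\Lambda^{p-1,q}M$. Applying the preceding proposition $g_{L^2}(\alpha,\beta)=\int_M\mu(\alpha\wedge\tilde\star\beta)$ in the bidegrees $(p,q)$ and $(p-1,q)$ gives
$$g_{L^2}(\d\omega,\eta)=g_{L^2}\big(\omega,(-1)^p\tilde\star^{-1}\d\tilde\star\eta\big),$$
that is, $\d^*=(-1)^p\tilde\star^{-1}\d\tilde\star$ on $\Lambda^{p,q}M$.

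For $\bar\d^*$ the argument is the same with the factors interchanged. I would use the analogous isomorphism $\bar\mu:\Lambda^{n,q}M\otimes K^*\to\Lambda^qM$, which contracts the first factor with $\operatorname{vol}_g^*$. The pairing formula holds equally with $\bar\mu$ in place of $\mu$ and $\int_M$ applied to a top form in the second slot, because $\star$ and the metric on $\Lambda^{p,q}M$ are symmetric under swapping the two factors. The Leibniz sign becomes $(-1)^{q-1}$, which yields $\bar\d^*=(-1)^q\tilde\star^{-1}\bar\d\tilde\star$.
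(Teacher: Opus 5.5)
Your integration-by-parts argument is correct. It rests on two points: the Leibniz rule on $K^*$-twisted forms, with sign $(-1)^{p-1}$ governed only by the first-factor degree (resp.\ $(-1)^{q-1}$ and the second factor for $\bar\d$), and $\mu\circ\d=d\circ\mu$, which holds because $\mu$ only contracts the $D$-parallel section $\operatorname{vol}_g\otimes\operatorname{vol}_g^*$. The paper gives no proof here and simply cites Shima; yours is the standard argument behind that citation, and like the paper you tacitly assume $M$ is oriented (otherwise read $\mu(\omega\wedge\tilde\star\eta)$ as a twisted form, to which Stokes still applies).
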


\begin{defin}
    Define the Laplacians $\square$  and  $\bar \square$ with respect to  $\d$ and $\bar \d$ by
    $$
    \square=\d \d^*+ \d^*\d, \ \ \ \bar \square=\bar\d \bar\d^*+ \bar \d^*\bar \d 
    $$
    A form $\alpha\in \Lambda^{p,q}M$ is called $\square$-harmonic if $\square \alpha =0$. The set of all $\square$-harmonic forms is denoted as $H_\square^{p,q}(M)$. Denote by  $P^{q}(M)$ the set of flat forms in $\Lambda^{q}(M)$.  
\end{defin}
\begin{lemma}\label{lemma26}
    For any $\alpha\in \Lambda^pM,\beta\in P^qM$ we have 
$$
\square(\alpha\otimes \beta)=(\Delta\alpha)\otimes \beta
$$     
\end{lemma}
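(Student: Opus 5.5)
The plan is to compute $\square(\alpha\otimes\beta)$ directly, using two ingredients: the behaviour of $\d$ on forms whose second factor is $D$-flat, and the formula $\d^*=(-1)^p\tilde\star^{-1}\d\tilde\star$ quoted above from \cite{Sh2},\cite{Sh3}. Here $\Delta=dd^*+d^*d$ is the Hodge Laplacian of $g$ on $\Lambda^pM$.

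\emph{Step 1.} Since $\beta\in P^qM$, in flat coordinates $\beta=\sum_J c_J\,dx^J$ with constant $c_J$. Writing $\alpha=\sum_I f_I\,dx^I$, the coordinate formula for $\d$ gives
$$
\d(\alpha\otimes\beta)=\sum_{I,J}c_J\,(df_I\wedge dx^I)\otimes dx^J=d\alpha\otimes\beta .
$$
Invariantly, $\d$ is $d$ on the first factor twisted by the flat connection on $\Lambda^qM$, so it satisfies a Leibniz rule, and $D\beta=0$ kills the second term. Hence $\d$ restricted to $\Lambda^*M\otimes\beta$ is just $d\otimes 1$.

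\emph{Step 2.} We want the adjoint identity $\d^*(\alpha\otimes\beta)=(d^*\alpha)\otimes\beta$. Using $\tilde\star(\alpha\otimes\beta)=*\alpha\otimes(*\beta\otimes\operatorname{vol}_g^*)$, we apply $\d$ to this product, using the twisted Leibniz rule on the second factor $s:=*\beta\otimes\operatorname{vol}_g^*\in\Lambda^{n-q}M\otimes K^*$. This yields $d(*\alpha)\otimes s$ plus a term $\pm\, e(*\alpha\wedge Ds)$. Applying $(-1)^p\tilde\star^{-1}$ to the first term gives exactly $(d^*\alpha)\otimes\beta$, since $d^*=\pm *d*$ with the usual signs, which must be matched against the $(-1)^p$. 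The second term is an error term $E(\alpha)$ of order zero in $\alpha$, built from $Ds$, i.e.\ from the $D$-derivative of the metric (in coordinates, of $g^{JK}$ and $\sqrt{\det g}$).

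\emph{Step 3.} Assemble the two pieces:
$$
\square(\alpha\otimes\beta)=(dd^*+d^*d)\alpha\otimes\beta+\d E(\alpha)+E(d\alpha).
$$
It remains to show $\d E(\alpha)+E(d\alpha)=0$, which is where the real work lies.

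\emph{Main obstacle.} The obstacle is the extra term $E$, since $\tilde\star$ does not preserve flatness of the second factor: $*\beta\otimes\operatorname{vol}_g^*$ is essentially $\beta^\sharp$, which is flat only when $Dg=0$. I would first check, in flat coordinates, whether $\d E+E d$ is a graded commutator of $e$ with a zero-order operator that vanishes identically, as happens for $L^2$-adjoint computations of ``twisted de Rham'' operators. If it does not vanish, I would fall back to the setting in which the lemma is actually used. That setting is a product metric on $M\times N$ with metrics for which flat forms stay flat under raising indices, i.e.\ $D$-parallel $g$, where $Ds=0$ and the lemma follows immediately from Steps 1--2. In that case I would state this hypothesis explicitly.
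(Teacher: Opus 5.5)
Your Step~3 is a genuine gap, and it cannot be closed. The cancellation $\d E(\alpha)+E(d\alpha)=0$ fails in general, so the lemma is false as stated for an arbitrary, even Hessian, metric $g$.

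Take $M=S^1=\R/\Z$ with its standard flat structure, $g=h(x)^2dx^2$ with $h>0$ nonconstant, and $\beta=dx$. This $g$ is Hessian, since $n=1$. We have $|dx|^2=h^{-2}$ and $\operatorname{vol}_g=h\,dx$, and integrating by parts gives
\[
\d^*(u\,dx\otimes dx)=-h\,(uh^{-3})'\otimes dx,\qquad d^*(u\,dx)=-h^{-1}(uh^{-1})' .
\]
This agrees with $\d^*=(-1)^p\tilde\star^{-1}\d\tilde\star$ for $p=1$. So your error term is $E(u\,dx)=2h^{-3}h'u\otimes dx$, which is zeroth order and built from $Dg$, exactly as you predicted. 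For a function $f$ this gives
\[
\square(f\otimes dx)=(\Delta f)\otimes dx+2h^{-3}h'f'\otimes dx ,
\]
which is nonzero, e.g.\ for $f=h$. So the first branch of your plan, checking for a cancellation, must fail; only your fallback is viable.

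The fallback is the right repair. Under the explicit hypothesis $Dg=0$, the section $s=*\beta\otimes\operatorname{vol}_g^*$ is $D$-flat, so $E=0$ and your Steps~1--2 prove the lemma. The sign you left open works out, since $(-1)^p*^{-1}d*=d^*$ on $p$-forms. The case $q=0$ also holds without any hypothesis, because there $\square=\Delta$.

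The paper's proof does not supply the cancellation you were looking for either. It gets $\d^*(\alpha\otimes\beta)=d^*\alpha\otimes\beta$ from the identity $g(d\alpha_1,\alpha_2)g(\beta_1,\beta_2)=g(\alpha_1,d^*\alpha_2)g(\beta_1,\beta_2)$ used pointwise, but $d^*$ is only an $L^2$-adjoint. Integrating by parts produces an extra term involving $d\bigl(g(\beta_1,\beta_2)\bigr)$; in the example above this is exactly your $E$. That term vanishes only when $g(\beta_1,\beta_2)$ is constant for flat $\beta_i$, i.e.\ when the metric induced by $g$ on $\Lambda^qM$ is $D$-parallel (for $q=1$ this is exactly $Dg=0$). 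So your analysis pinpoints the hypothesis the paper's argument silently uses, and you should state the lemma with it.
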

\begin{proof}
    For any $\alpha \in \Lambda^pM,\beta\in P^qM$ we have
$$
\d( \alpha \otimes \beta) = e ((D\alpha)\otimes \beta)=d\alpha\otimes \beta.
$$
and for any $\alpha_1,\alpha_2 \in \Lambda^pM,\beta_1,\beta_2\in P^qM$ we have
    \begin{multline*}
           g( d \alpha_1 \otimes \beta_1,\alpha_2\otimes \beta_2)=g(d\alpha_1,\alpha)g(\beta_1,\beta_2) \\
   =g(\alpha_1,d^*\alpha)g(\beta_1,\beta_2) = g(  \alpha_1 \otimes \beta_1,d^*\alpha_2\otimes \beta_2).
    \end{multline*}
  Therefore, for any $\alpha \in \Lambda^pM,\beta\in P^qM$ we have
    $$
\d^* ( \alpha \otimes \beta) =d^* \alpha \otimes \beta. 
    $$
   Thus, for any $\alpha \in \Lambda^pM,\beta\in P^qM$ we have
    $$
\square ( \alpha \otimes \beta)=(\d\d^*+\d^*\d) ( \alpha \otimes \beta)=(dd^*+d^*d) \alpha \otimes \beta=\Delta\alpha\otimes\beta
    $$
\end{proof}

Combining Lemma \ref{lemma26} with the sheaves equality $\Lambda^{p,q}M=\Lambda^pM\otimes P^qM$, we get the following two propositions.  
\begin{proposition}
  We have ${\cal H}^{p,q}_\square(M)={\cal H}_\Delta^p(M,P^q)$.    
\end{proposition}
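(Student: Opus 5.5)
We interpret ${\cal H}_\Delta^p(M,P^q)$ as the space of $\Delta$-harmonic $p$-forms with values in the flat bundle $(\Lambda^qM,D)$. Concretely, $\Delta$ acts on $\Lambda^pM\otimes P^q$ through the first factor, so these are the sections $\sum_k \alpha_k\otimes\beta_k$ with $\beta_k$ locally flat and $\Delta$ applied to the $\alpha_k$. The plan is to show that $\square$ coincides with this twisted Hodge Laplacian on all of $\Lambda^{p,q}M$, not only on decomposable elements. The equality of kernels then follows.

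First we work locally. Around any point choose flat coordinates $x^1,\ldots,x^n$. The forms $dx^J$, $|J|=q$, are $D$-flat, so they give a local frame of $P^q$, and every $\omega\in\Lambda^{p,q}M$ can be written on this chart uniquely as $\omega=\sum_J \alpha_J\otimes dx^J$ with $\alpha_J\in\Lambda^pM$. This is the sheaf equality $\Lambda^{p,q}M=\Lambda^pM\otimes P^qM$ mentioned before the statement.

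Next we check that $\square$ is a local operator, which lets us apply Lemma \ref{lemma26} termwise. Both $\d$ and $\d^*=(-1)^p\tilde\star^{-1}\d\tilde\star$ are differential operators, so $\square$ is too. Therefore $\square\omega$ on the chart equals $\sum_J \square(\alpha_J\otimes dx^J)$. We use this formula on the chart after multiplying by cutoff functions, or simply because differential operators commute with restriction. By Lemma \ref{lemma26} this equals $\sum_J(\Delta\alpha_J)\otimes dx^J$. The main point to verify is that the adjoint computation in Lemma \ref{lemma26}, done globally there, is really the pointwise identity $\d^*(\alpha\otimes\beta)=d^*\alpha\otimes\beta$ for locally flat $\beta$. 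This follows from the explicit formula for $\d^*$ via $\tilde\star$: $\star(\alpha\otimes\beta)=*\alpha\otimes*\beta$, and $*\beta$ is not flat in general since $g$ is arbitrary. So we expect this to be the main obstacle, and we would handle it as follows. Write $\tilde\star(\alpha\otimes dx^J)=*\alpha\otimes(*dx^J\otimes\mathrm{vol}_g^*)$. The factor $*dx^J\otimes\mathrm{vol}_g^*\in\Lambda^{n-q}\otimes K^*$ is again flat, because it is the contraction of $\mathrm{vol}_g^*$ with $dx^J$ up to a constant. Hence $\d$ acts only on the first factor, and the desired formula follows.

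Finally, the twisted Laplacian on $\Lambda^p(M,P^q)$ is by definition $\omega\mapsto\sum_J\Delta\alpha_J\otimes dx^J$ in any flat local frame, and this is well defined because transition functions between flat frames are locally constant. So $\square$ equals the twisted Laplacian, and $\square\omega=0$ if and only if $\omega$ is $\Delta$-harmonic in ${\cal H}^p_\Delta(M,P^q)$. This proves ${\cal H}^{p,q}_\square(M)={\cal H}^p_\Delta(M,P^q)$.
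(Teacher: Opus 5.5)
Your route is the paper's (Lemma~\ref{lemma26} applied termwise in a flat frame via $\Lambda^{p,q}M=\Lambda^pM\otimes P^q$), and you correctly spot the delicate point, but your resolution of it is false. Up to sign, $*dx^J=\iota_{(dx^J)^\sharp}\mathrm{vol}_g$. The density factor cancels against $\mathrm{vol}_g^*$, but $(dx^J)^\sharp$ is formed with $g^{-1}$, whose coefficients in flat coordinates are constant only if $Dg=0$. Hence $*dx^J\otimes\mathrm{vol}_g^*$ is not flat in general. Nor is $\iota_{dx^J}\mathrm{vol}_g^*$, which carries the factor $(\det g)^{-1/2}$.

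Concretely, take $T^1$ with $g=h(x)\,dx^2$. Then $*dx\otimes\mathrm{vol}_g^*=h^{-1}(dx)^*$, where $(dx)^*\in K^*$ is the flat section dual to $dx$. The $L^2$-products on $\Lambda^{0,1}$ and $\Lambda^{1,1}$ are $\int fc\,h^{-1/2}dx$ and $\int ab\,h^{-3/2}dx$, and either $\d^*=(-1)^p\tilde\star^{-1}\d\tilde\star$ or a direct integration by parts gives
$$
\d^*(a\,dx\otimes dx)=-h^{1/2}\bigl(a\,h^{-3/2}\bigr)'\otimes dx,\qquad d^*(a\,dx)\otimes dx=-h^{-1/2}\bigl(a\,h^{-1/2}\bigr)'\otimes dx .
$$
So $\d^*(\alpha\otimes\beta)\ne d^*\alpha\otimes\beta$ for flat $\beta$. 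Consequently $\square$ is not the first-factor Laplacian $\sum_J\alpha_J\otimes dx^J\mapsto\sum_J\Delta\alpha_J\otimes dx^J$. In fact, with your reading of ${\cal H}^p_\Delta(M,P^q)$ the statement is false. Here ${\cal H}^{1,1}_\square(T^1)=\R\,h^{3/2}dx\otimes dx$, whereas the first-factor harmonic forms are $\R\,h^{1/2}dx\otimes dx$, and these differ whenever $h$ is non-constant. Every metric on $T^1$ is Hessian, so this case is not excluded.

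The same defect is in the proof of Lemma~\ref{lemma26}, which you otherwise use as a black box. Replacing $g(d\alpha_1,\alpha_2)\,g(\beta_1,\beta_2)$ by $g(\alpha_1,d^*\alpha_2)\,g(\beta_1,\beta_2)$ is an implicit integration by parts that drops the derivative of the weight $g(\beta_1,\beta_2)$. For flat $\beta_i$ this function is constant only when $g$ is $D$-parallel. So your argument, like the lemma, is correct exactly when $Dg=0$, and then $*dx^J\otimes\mathrm{vol}_g^*$ really is flat.

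For general $g$, read ${\cal H}^p_\Delta(M,P^q)$ as the kernel of the Hodge Laplacian $d_Dd_D^*+d_D^*d_D$ of the flat bundle $(\Lambda^qM,D)$. Here $d_D^*$ is the adjoint for $g$ on $\Lambda^p$ \emph{and} for the fibre metric induced by $g$ on $\Lambda^q$. With that reading there is nothing to compute:
\begin{itemize}
\item $\d=e\circ D$ is exactly the exterior covariant derivative $d_D$, locally $\sum_J\alpha_J\otimes dx^J\mapsto\sum_J d\alpha_J\otimes dx^J$;
\item the $L^2$-product on $\Lambda^{p,q}M$ is the one defining $d_D^*$.
\end{itemize}
Hence $\d^*=d_D^*$ and $\square=d_Dd_D^*+d_D^*d_D$ identically. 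The $\tilde\star$ in the formula for $\d^*$ is precisely what records the non-flat fibre metric that your argument discarded.
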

\begin{proposition}
        The operator $\square : \Lambda^{p,q}M\to \Lambda^{p,q}M$ is elliptic.
    \end{proposition}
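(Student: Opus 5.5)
We will check ellipticity by computing the principal symbol of $\square$, and we will do it locally, using Lemma \ref{lemma26}.

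Since ellipticity is a pointwise condition on the principal symbol, it suffices to work on a neighbourhood $U$ of an arbitrary point $x\in M$ carrying flat coordinates $x^1,\ldots,x^n$. On $U$ the flat forms $dx^J$, $|J|=q$, are $D$-parallel, so they lie in $P^q(U)$. They trivialize $\Lambda^qM|_U$. Hence every $\omega\in\Lambda^{p,q}(U)$ can be written uniquely as
$$
\omega=\sum_{|J|=q}\alpha_J\otimes dx^J, \qquad \alpha_J\in\Lambda^p(U).
$$
This is exactly the sheaf identification $\Lambda^{p,q}M=\Lambda^pM\otimes P^qM$ used in the preceding proposition.

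By $\R$-linearity of $\square$ together with Lemma \ref{lemma26}, on $U$ we get
$$
\square\omega=\sum_{|J|=q}(\Delta\alpha_J)\otimes dx^J.
$$
We must make sure that $\d^*$ is a local operator. This holds because $\d^*=(-1)^p\tilde\star^{-1}\d\tilde\star$ is a differential operator. So $\square$ restricted to $U$ is computed by the formula above, and Lemma \ref{lemma26} applies to $\alpha_J\otimes dx^J$ on $U$.

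In the frame $\{dx^J\}$, the operator $\square$ is therefore the diagonal operator $\Delta\otimes\mathrm{Id}_{\Lambda^qM}$ acting on $\Lambda^pM\otimes\R^{\binom nq}$. Its principal symbol at $\xi\in T^*_xM\setminus\{0\}$ is
$$
\sigma_\xi(\square)=\sigma_\xi(\Delta)\otimes \mathrm{Id}=|\xi|_g^2\,\mathrm{Id},
$$
because the Hodge Laplacian $\Delta$ has principal symbol $|\xi|^2_g$. We should note that the symbol does not depend on the choice of frame. The frame change to $dx^J$ is $C^\infty$-linear and so does not affect top-order terms, which makes the symbol well defined. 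Since $|\xi|_g^2\,\mathrm{Id}$ is invertible for $\xi\neq0$, $\square$ is elliptic.

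The main point to watch is the legitimacy of applying Lemma \ref{lemma26} locally. This requires both the locality of $\d^*$ just noted and the fact that its proof used only pointwise or integration-by-parts identities. Those identities hold for compactly supported forms on $U$, and that is enough to identify the differential operator.
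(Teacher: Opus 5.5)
Your argument is the paper's own: you trivialize $\Lambda^qM$ by the flat frame $dx^J$, apply Lemma~\ref{lemma26} to get $\square=\Delta\otimes\mathrm{Id}$, and read off the symbol. The gap is that the identity $\square\omega=\sum_J(\Delta\alpha_J)\otimes dx^J$, on which both proofs rest, is false in general. Your justification (``only pointwise or integration-by-parts identities'') is exactly where it fails. The step $g(d\alpha_1,\alpha_2)\,g(\beta_1,\beta_2)\rightsquigarrow g(\alpha_1,d^*\alpha_2)\,g(\beta_1,\beta_2)$ is an integration by parts against the weight $g(\beta_1,\beta_2)$, and this weight is not constant for $D$-flat $\beta_i$ unless $g$ is $D$-parallel. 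Here is the correct statement. Put $G_{JK}=g(dx^J,dx^K)$, write $\eta=\sum_K\gamma_K\otimes dx^K$ and $\d^*\eta=\sum_L\delta_L\otimes dx^L$. Then $\sum_L G_{JL}\delta_L=d^*\bigl(\sum_K G_{JK}\gamma_K\bigr)$, so $\d^*$ differs from $d^*\otimes\mathrm{Id}$ by a zeroth-order term built from $dG$. A concrete check: on $M=\R/\Z$ with the (automatically Hessian) metric $g=h(x)^2dx^2$, you get $\square(f\otimes dx)=\bigl(\Delta f+2h^{-3}h'f'\bigr)\otimes dx$, which is not $(\Delta f)\otimes dx$ when $h'\neq 0$.

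Ellipticity still holds, because the error is of lower order, but you need to argue at the level of symbols rather than through an exact formula. In the flat frame $\d=d\otimes\mathrm{Id}$ holds exactly, so $\sigma_\xi(\d)=(\xi\wedge\cdot)\otimes\mathrm{Id}$ up to a factor $\sqrt{-1}$. The symbol of the formal adjoint is the pointwise adjoint with respect to the tensor-product fibre metric, namely $\iota_{\xi^\sharp}\otimes\mathrm{Id}$. Hence $\sigma_\xi(\square)=\bigl(\xi\wedge\iota_{\xi^\sharp}+\iota_{\xi^\sharp}\,\xi\wedge\bigr)\otimes\mathrm{Id}=|\xi|_g^2\,\mathrm{Id}$; equivalently, $\square=\Delta\otimes\mathrm{Id}+(\text{first-order operator})$. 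Replace your exact identity with this symbol-level statement; as written, your proof (like the paper's) relies on a false formula.
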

    
    The following corollary follows from Spectral Theorem for elliptic operators on compact manifolds and Ellitic Regularity Theorem, as in the complex case. 
    \begin{cor}\label{Elliptic}
    Consider the operator $\square$ as an operator on the Hilbert space $L^2(\Lambda^{p,q}M)$. Then the following conditions are satisfied:
    \begin{itemize}
        \item [a)] There is an orthonormal basis in the Hilbert space $L^2(\Lambda^{p,q}M)$ consisting
of eigenvectors of $\square$.
\item[b)] The space of harmonic forms ${\cal H}_\square^{p,q} M$ is finite dimensional. 
\item[c)]  All eigenvectors  of $\square$ are analytic.
\item[d)] We have $L^2(\Lambda^{p,q}M)={\cal H}_\square^{p,q}(M)\oplus \square(L^2(\Lambda^{p,q}M))$.
     \end{itemize}
     
    \end{cor}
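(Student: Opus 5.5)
The idea is to reduce everything to the classical spectral theory of the Hodge Laplacian twisted by a flat bundle. By the sheaf identity $\Lambda^{p,q}M=\Lambda^pM\otimes P^q$, a section of $\Lambda^{p,q}M$ is the same thing as a $p$-form with values in the flat vector bundle $E_q=(\Lambda^qM,D)$. We choose a metric $h$ on $E_q$. A naive choice, the one induced by $g$, need not be $D$-parallel, and then $\square$ is not literally the twisted Laplacian. So we work with symbols instead. By Lemma \ref{lemma26}, in a flat coordinate chart with the flat local frame $dx^J$ of $E_q$, the operator $\square$ differs from $\Delta\otimes \mathrm{id}$ only by lower-order terms. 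These terms come from differentiating the fibre metric in the adjoint $\d^*=(-1)^p\tilde\star^{-1}\d\tilde\star$.

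\textbf{Step 1 (ellipticity).} The principal symbol of $\square$ at $\xi$ equals that of $\Delta\otimes\mathrm{id}_{E_q}$, namely $|\xi|^2_g\,\mathrm{id}$. This is the content of the preceding proposition.

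\textbf{Step 2 (self-adjointness and nonnegativity).} By construction $\d^*$ is the formal $L^2$-adjoint of $\d$ with respect to $g_{L^2}$ (Shima's formula). Hence $\square$ is formally self-adjoint, and
$$(\square\omega,\omega)_{L^2}=\|\d\omega\|^2+\|\d^*\omega\|^2\ge 0.$$
Since $M$ is compact, we may invoke the standard theory of formally self-adjoint elliptic operators on sections of a Hermitian (here real Euclidean) vector bundle over a closed manifold. That theory gives the following:
\begin{itemize}
\item $\square$ has a self-adjoint extension with compact resolvent, via G\aa rding's inequality and Rellich compactness.
\item The spectrum is discrete, nonnegative, and has finite multiplicities, and the eigenvectors form an orthonormal basis of $L^2(\Lambda^{p,q}M)$. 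This gives a).
\item The kernel is the $0$-eigenspace, which is finite dimensional. This gives b).
\end{itemize}

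\textbf{Step 3 (regularity).} Elliptic regularity shows that eigenforms are $C^\infty$. For analyticity we need the coefficients to be analytic. A flat affine manifold carries a real-analytic structure (flat charts with affine transitions), and we equip $M$ with a metric $g$ that is analytic in that structure. This is possible by density of analytic metrics, or we may simply assume it, as in the complex case. Then $\square-\lambda$ is an elliptic operator with analytic coefficients, and the Petrowsky/Morrey–Nirenberg theorem gives c).

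\textbf{Step 4 (orthogonal decomposition).} For d) we expand $\omega=\sum c_k e_k$ in the eigenbasis with eigenvalues $\lambda_k$. Split off the harmonic part $\sum_{\lambda_k=0}c_ke_k$. On the orthogonal complement, discreteness of the spectrum gives $\lambda_k\ge\lambda_{\min}>0$. So $\sum_{\lambda_k>0}(c_k/\lambda_k)e_k$ converges in $L^2$, indeed in Sobolev $H^2$, and $\square$ maps it to the remaining part of $\omega$. Orthogonality of the two summands follows from self-adjointness: $(\square\eta,h)=(\eta,\square h)=0$ for harmonic $h$.

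The main obstacle is bookkeeping rather than a new idea. First, one must check that $\square$ really is formally self-adjoint with symbol $|\xi|^2$ even when $g$ is not $D$-parallel. Second, one must secure an analytic metric so that c) holds. If the latter is problematic, I would weaken c) to smoothness, which suffices for all later uses.
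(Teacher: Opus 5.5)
Your argument is essentially the paper's: the paper simply invokes the spectral theorem for elliptic operators on compact manifolds together with elliptic regularity, and your Steps 1, 2 and 4 are a correct unpacking of that for a), b) and d). Your caution about Lemma \ref{lemma26} is also justified. Its proof moves $d$ to $d^*$ past the factor $g(\beta_1,\beta_2)$, which is constant only when $g$ is $D$-parallel, so the lemma holds only modulo lower-order terms. As you say, the principal symbol $|\xi|_g^2\,\mathrm{id}$ is all that ellipticity needs.

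The one step that does not work is c) via ``density of analytic metrics''. The operator $\square$, the $L^2$ inner product and the eigenforms are all determined by the given $g$, so passing to a nearby analytic metric says nothing about the original eigenvectors. In fact c) is false for non-analytic $g$. Take $T^1=\R/\Z$ with $g=h(x)^2dx^2$. On $\Lambda^{0,0}$ one has $\square=\Delta_g=-d^2/ds^2$ with $s(x)=\int_0^x h$. For $k\neq 0$ the eigenspace of $(2\pi k/L)^2$, where $L=\int_0^1 h$, is spanned by $\cos(2\pi k s/L)$ and $\sin(2\pi k s/L)$. If both were analytic in the flat coordinate $x$, then so would be $e^{2\pi i k s/L}$, hence $h=s'$. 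Elliptic regularity, which is all the paper cites, gives only smoothness. Your fallback is therefore the right resolution: either assume $g$ is analytic in the flat-chart analytic structure and apply Morrey--Nirenberg, or weaken c) to $C^\infty$, which is all that Theorems \ref{cohomology} and \ref{main} actually use.
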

\begin{theorem}[\cite{Sh2},\cite{Sh3}]\label{cohomology}
    We have the following canonical isomorphisms
    $$
    H^{p,q}(M)=H^{p,q}_\square(M).
    $$
\end{theorem}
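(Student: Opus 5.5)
The plan is the standard Hodge-theoretic argument, run on the complex $(\Lambda^{\bullet,q}M,\d)$ for fixed $q$, with $M$ compact and $g$ any Riemannian metric. The first step is to use Corollary \ref{Elliptic}(d) to write every smooth $\omega\in\Lambda^{p,q}M$ as
$$
\omega = h + \square\gamma ,
$$
where $h\in{\cal H}^{p,q}_\square(M)$. Elliptic regularity makes $\gamma$ smooth whenever $\omega$ is. Let $G$ denote the Green operator, i.e. the inverse of $\square$ on the orthogonal complement of ${\cal H}^{p,q}_\square$, extended by zero on ${\cal H}^{p,q}_\square$, and let $\pi_H$ be the orthogonal projection onto ${\cal H}^{p,q}_\square$. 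Then
$$
\omega=\pi_H\omega+\d\d^*G\omega+\d^*\d G\omega .
$$

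The second step identifies the harmonic forms. For $h\in\Lambda^{p,q}M$,
$$
0=g_{L^2}(\square h,h)=\|\d h\|^2+\|\d^*h\|^2 ,
$$
so $h$ is harmonic if and only if $\d h=0$ and $\d^*h=0$. Since $\square$ commutes with $\d$, the operator $G$ does too. If $\d\omega=0$, then
$$
\|\d^*\d G\omega\|^2=g_{L^2}(\d^*\d G\omega,\ \d^*G\d\omega)=0 ,
$$
so $\omega=\pi_H\omega+\d(\d^*G\omega)$. Hence the map ${\cal H}^{p,q}_\square(M)\to H^{p,q}(M)$, $h\mapsto[h]$, is surjective.

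For injectivity, suppose $h=\d\beta$ is harmonic. Then $\|h\|^2=g_{L^2}(\d\beta,h)=g_{L^2}(\beta,\d^*h)=0$. So the map is an isomorphism. It is canonical once $g$ is fixed, and as a map to $H^{p,q}(M)$ it does not depend on the choice of $g$.

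Alternatively, the proposition ${\cal H}^{p,q}_\square(M)={\cal H}_\Delta^p(M,P^q)$ reduces everything to ordinary Hodge theory for the de Rham complex with coefficients in the flat bundle $P^q$. Together with $H^{p,q}(M)=H^p(M,P^q)$, the result then follows from the flat-bundle Hodge theorem. I would mention this route as a consistency check.

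The main obstacle is justifying the analytic input: that $\d^*$ is really the formal $L^2$-adjoint, and that $\square$ commutes with $\d$ and hence with $G$. The commutation is formal: $\d\square=\d\d^*\d=\square\d$ because $\d^2=0$. The adjointness is the earlier proposition giving the formula for $\d^*$. With both in hand, the remaining steps are routine.
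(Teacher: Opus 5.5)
Your proposal is correct and follows the paper's route. The paper only says that the theorem follows from Corollary \ref{Elliptic} exactly as in the classical Dolbeault case, and you have carried out that classical argument in full on compact $M$: the Green operator with $G\d=\d G$, the characterization $\square h=0\iff \d h=0=\d^*h$, and surjectivity and injectivity of $h\mapsto[h]$.
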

\begin{proof}
    The theorem follows from Corollary \ref{Elliptic} by the same way as in the classical case of Dolbeault cohomology. 
\end{proof}
\section{K\"unneth formula for $L^2$-forms}

    \begin{theorem}\label{L2Kunneth}
Let $M$ and $N$ be two compact flat affine manifolds. Then indecomposable forms are dense in the space of all forms $\Lambda^{p,q}(M\times N)$. In other words, we have 
$$
L^2(\Lambda^{p,q}(M\times N))=\bigoplus_{j\le p,j\le q} L^2(\Lambda^{i,j}M)\hat\otimes L^2(\Lambda^{p-i,q-j}(N))
$$
    \end{theorem}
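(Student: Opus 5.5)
The plan is to use Corollary \ref{Elliptic} for the operators $\square_M$ and $\square_N$, and to show that products of their eigenforms span a dense subspace of $L^2(\Lambda^{p,q}(M\times N))$. (The statement says "indecomposable", but it means decomposable forms, i.e.\ finite sums of products $\omega\wedge\eta$ with $\omega\in\Lambda^{i,j}M$, $\eta\in\Lambda^{p-i,q-j}N$; the index range is $0\le i\le p$, $0\le j\le q$.) The approach follows the classical argument of \cite{GH} for Dolbeault cohomology.

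\textbf{Step 1: splitting the bundle and the metric.} Equip $M\times N$ with the product flat connection and the product metric $g_M+g_N$. Then $\Lambda^1(M\times N)=\pi_1^*\Lambda^1M\oplus\pi_2^*\Lambda^1N$, and this splitting is orthogonal. Hence, pointwise,
$$
\Lambda^{p,q}(M\times N)=\bigoplus_{i,j}\pi_1^*\Lambda^{i,j}M\otimes \pi_2^*\Lambda^{p-i,q-j}N,
$$
where the isomorphism is the $\wedge$-product defined above, up to signs. This decomposition is orthogonal for the induced metric, and the pointwise norm is multiplicative on products. Since $\operatorname{vol}_{M\times N}=\operatorname{vol}_M\wedge\operatorname{vol}_N$, Fubini gives
$$
\langle \omega_1\wedge\eta_1,\omega_2\wedge\eta_2\rangle_{L^2}=\langle\omega_1,\omega_2\rangle_{L^2}\langle\eta_1,\eta_2\rangle_{L^2}.
$$
So the product map extends to an isometric embedding
$$
\bigoplus_{i,j} L^2(\Lambda^{i,j}M)\hat\otimes L^2(\Lambda^{p-i,q-j}N)\hookrightarrow L^2(\Lambda^{p,q}(M\times N)).
$$
It remains to prove that this embedding is surjective.

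\textbf{Step 2: an orthonormal family of products.} By Corollary \ref{Elliptic}(a), choose orthonormal eigenbases $\{\phi_k\}$ of $\square_M$ on each $L^2(\Lambda^{i,j}M)$ and $\{\psi_l\}$ of $\square_N$ on each $L^2(\Lambda^{i',j'}N)$. Their products $\phi_k\wedge\psi_l$ form an orthonormal family in $L^2(\Lambda^{p,q}(M\times N))$. Next, verify that
$$
\square_{M\times N}(\phi\wedge\psi)=(\square_M\phi)\wedge\psi\pm\ldots+\phi\wedge\square_N\psi .
$$
For this, write $\d_{M\times N}=\d_M\pm\d_N$ in local flat coordinates. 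Since $\d^*$ is the formal adjoint, the same splitting $\d^*_{M\times N}=\d_M^*\pm\d_N^*$ follows from the product structure of the metric, for instance via the formula $\d^*=(-1)^p\tilde\star^{-1}\d\tilde\star$ applied to the product $\tilde\star$. The cross terms cancel exactly as in the Kähler/Dolbeault case. Consequently each $\phi_k\wedge\psi_l$ is an eigenform of $\square_{M\times N}$ with eigenvalue $\lambda_k+\mu_l$.

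\textbf{Step 3: completeness (the main obstacle).} Suppose $\theta\in L^2(\Lambda^{p,q}(M\times N))$ is orthogonal to every $\phi_k\wedge\psi_l$; we must show $\theta=0$. Decompose $\theta$ by Step 1 into components $\theta_{ij}$, each a section of $\pi_1^*\Lambda^{i,j}M\otimes\pi_2^*\Lambda^{p-i,q-j}N$. For almost every $y\in N$, regard $\theta_{ij}(\cdot,y)$ as an $L^2$ form on $M$ with values in the fixed finite-dimensional space $\Lambda^{p-i,q-j}_yN$. By Fubini, for each $k$ the pairing $c_k(y)=\langle\theta_{ij}(\cdot,y),\phi_k\rangle_{L^2(M)}$ is an $L^2$ section of $\Lambda^{p-i,q-j}N$. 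Orthogonality to all $\phi_k\wedge\psi_l$ says that $c_k\perp\psi_l$ for all $l$, so $c_k=0$ because $\{\psi_l\}$ is complete. Hence for a.e.\ $y$ the form $\theta_{ij}(\cdot,y)$ is orthogonal to the complete basis $\{\phi_k\}$ (tensored with a fixed basis of the fibre), so it vanishes. Therefore $\theta=0$.

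The delicate points are measurability and the use of Fubini for vector-valued sections, which I would handle by working in a finite atlas with local frames. An alternative route to completeness is to note that $\square_{M\times N}$ is elliptic on a compact manifold, so its eigenforms are complete by Corollary \ref{Elliptic}, and then to show that each eigenspace is spanned by the products; however, this needs the same completeness argument, so I would use the Fubini argument directly.
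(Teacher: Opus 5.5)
Your proof is correct, but it proves completeness by a different route than the paper.

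The paper's argument is local. It takes a form $\eta$ orthogonal to every $\alpha\otimes\beta$. If $\eta_{(x,y)}\neq 0$, it builds decomposable test forms supported near $(x,y)$ that agree with $\eta$ at that point. The pairing integrand is then positive near $(x,y)$, contradicting orthogonality. This route needs no bases, no spectral theory, and not even compactness.

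As written, however, it has two loose ends:
\begin{itemize}
\item It only treats smooth $\eta$. Showing that the orthogonal complement of the decomposables contains no nonzero smooth form does not by itself show that complement is zero in $L^2$. One still needs a density step, e.g.\ approximating smooth forms supported in product charts by finite sums of products, and using that smooth forms are dense in $L^2$.
\item A single $\alpha\otimes\beta$ can match $\eta_{(x,y)}$ only if $\eta_{(x,y)}$ is decomposable, so finite sums are really needed.
\end{itemize}

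Your Step 3 is the standard Hilbert-space proof of $L^2(X\times Y)=L^2(X)\hat\otimes L^2(Y)$, with bundle coefficients handled fibrewise via Fubini. Its advantages:
\begin{itemize}
\item It works directly with arbitrary $L^2$ elements. The only detail to state is that the countably many null sets where some $c_k(y)\neq 0$ are discarded simultaneously.
\item It shows at once that $\{\phi_k\wedge\psi_l\}$ is an orthonormal basis, which is exactly what the proof of Theorem \ref{main} uses afterwards.
\end{itemize}

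Your Step 2 is not needed for this statement; it is the content of the following lemma. Any orthonormal bases of the factor spaces would serve in Step 3, so choosing eigenbases via Corollary \ref{Elliptic} only pays off later.
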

    \begin{proof}
    We can consider
    $
    \bigoplus_{j\le p,j\le q} L^2(\Lambda^{i,j}M)\hat\otimes L^2(\Lambda^{p-i,q-j}(N))$ as a subset of $L^2(\Lambda^{p,q}(M\times N))$.
        Hence, it is enough to prove that if $\eta\in \Lambda^{p,q}M$ is a differential form such that for any $\alpha\in \Lambda^{i,j} M, \beta\in \Lambda^{p-i,q-j}M$ we have 
        $$
        \int_{M\times N} g(\eta,(\alpha\otimes\beta))\text{Vol}_g=0
        $$
        then $\eta=0.$ Suppose for a point $(x,y)\in M\times N$ we have $\eta_{(x,y)}\ne 0$. Then there exists  a neighborhood $U\ni(x,y)$ and forms $\alpha\in \Lambda^{i,j} M, \beta\in \Lambda^{p-i,q-j}M$  with compact support lying in $U$ such that 
        $$
        g(\eta,(\alpha\otimes\beta)) \ge 0, \ \ \ \text{and} \ \ \ \eta_{(x,y)}=(\alpha\otimes\beta)_{(x,y)}.
        $$
        Then we have 
        $$
        g(\eta,(\alpha\otimes\beta)) > 0,
        $$
        in a neighborhood of $(x,y)$. Hence,
        $$
        \int_{M\times N} g(\eta,(\alpha\otimes\beta))\text{Vol}_g>0.
        $$
        Thus, for any $(x,y)\in M$ we have $\eta_{(x,y)}=0$.
    \end{proof}

\begin{lemma}
    Let $M,N$ be two flat affine manifolds. Then we have $$\square_{M\times N}=\square_M+\square_N$$ i.e. for any $\alpha\in \Lambda^{i,j} M, \beta\in \Lambda^{p-i,q-j}N$ we have
    $$
    \square_{M\times N}(\alpha\otimes \beta)=(\square_{M} \alpha)\otimes \beta+\alpha\otimes (\square_N \beta).
    $$
\end{lemma}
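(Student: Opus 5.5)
We use the product metric $g=g_M+g_N$ on $M\times N$ throughout; this is what makes the Laplacian split. The product $\alpha\otimes\beta$ of $\alpha=a\otimes a'\in\Lambda^{i,j}M$ and $\beta=b\otimes b'\in\Lambda^{p-i,q-j}N$ means $(\pi_1^*a\wedge\pi_2^*b)\otimes(\pi_1^*a'\wedge\pi_2^*b')$, i.e. the $\wedge$-product on $\Lambda^{*,*}$ defined in the Hodge theory section. The plan has three steps: split $\d$ with a Leibniz rule, split $\d^*$ the same way, and then expand $\square$ so that the cross terms cancel.

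\emph{Step 1: $\d$.} On $M\times N$ the product connection $D=D_M\oplus D_N$ is flat, and flat coordinates are $(x^1,\ldots,x^m,y^1,\ldots,y^k)$. For a decomposable $\alpha=f\,dx^I\otimes dx^J$ on $M$ and $\beta=h\,dy^K\otimes dy^L$ on $N$, the coordinate formula for $\d$ gives
$$
\d(\alpha\otimes\beta)=(\d_M\alpha)\otimes\beta+(-1)^{i}\alpha\otimes(\d_N\beta).
$$
This is because $d(fh)=h\,df+f\,dh$, and moving $dh$ past $dx^I$ in the first factor costs $(-1)^{|I|}=(-1)^i$. The second tensor factor plays no role in the sign, since $\d$ only wedges into the first slot. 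Both sides are $\mathbb R$-bilinear and local, so the identity holds for all $\alpha,\beta$.

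\emph{Step 2: $\d^*$.} By the formula $\d^*=(-1)^p\tilde\star^{-1}\d\tilde\star$ from \cite{Sh2},\cite{Sh3}, the adjoint is a local operator. It therefore suffices to compute pointwise-formal adjoints, using the pairing $g(\alpha_1\otimes\beta_1,\alpha_2\otimes\beta_2)=g_M(\alpha_1,\alpha_2)\,g_N(\beta_1,\beta_2)$ for the product metric and the product volume form. Integrating against compactly supported test forms $\alpha_2\otimes\beta_2$ and applying Fubini, exactly as in Lemma~\ref{lemma26}, gives
$$
\d^*(\alpha\otimes\beta)=(\d_M^*\alpha)\otimes\beta+(-1)^{i}\alpha\otimes(\d_N^*\beta).
$$
Only compact supports enter, so no compactness of $M$ or $N$ is needed. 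Decomposable test forms suffice because their span is dense; this is the argument of Theorem~\ref{L2Kunneth}, applied locally. The sign $(-1)^i$ is the same as in Step 1 because $\d_N^*$ lowers the $N$-degree and leaves the $M$-degree $i$ unchanged.

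\emph{Step 3: expand $\square=\d\d^*+\d^*\d$.} Substitute the formulas of Steps 1 and 2. The pure terms give $\square_M\alpha\otimes\beta+\alpha\otimes\square_N\beta$. The mixed terms are
$$
(-1)^{i-1}\d_M^*\alpha\otimes\d_N\beta+(-1)^{i}\d_M^*\alpha\otimes\d_N\beta
$$
and
$$
(-1)^{i+1}\d_M\alpha\otimes\d_N^*\beta+(-1)^{i}\d_M\alpha\otimes\d_N^*\beta,
$$
and each pair cancels. The sign bookkeeping here, especially making sure that $\d^*$ picks up the same $(-1)^i$ as $\d$, is the main obstacle. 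I would verify it carefully in flat coordinates with orthonormal frames of $g_M$ and $g_N$, or equivalently check that the local formula defines an operator satisfying the adjoint identity.
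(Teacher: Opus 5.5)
Your proof is correct and follows the paper's route. Like the paper, you write $\d$ and $\d^*$ on $M\times N$ as an $M$-part plus an $N$-part and check that the mixed terms $\d_M\d_N^*+\d_N^*\d_M$ and $\d_N\d_M^*+\d_M^*\d_N$ cancel. The only difference is how you get the splitting of $\d^*$ with the sign $(-1)^i$: you derive it from adjointness and Fubini as in Lemma~\ref{lemma26}, whereas the paper reads it off from $\d^*=(-1)^p\tilde\star^{-1}\d\tilde\star$; your version also tracks the signs more explicitly than the paper's sketch.
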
\label{SumOfLaplacians}
It follows from the identity $\d^*=(-1)^p\tilde\star^{-1}\d \tilde\star$ that
\begin{multline*}
    \d_M\d_N^*(\alpha\otimes\beta)+\d_N^*\d_M(\alpha\otimes\beta)
        \\=(-1)^p\d_M  \alpha\otimes (\tilde\star^{-1}\d_N \tilde\star \beta)+(-1)^{p+1}\d_M  \alpha\otimes (\tilde\star^{-1}\d_N \tilde\star \beta)=0.
\end{multline*}
That is, $\d_N^*\d_M+\d_M \d_N^*=0$. Similarly, we have $\d_M^*\d_N+\d_N \d_M^*=0$. Thus
\begin{multline*}
\square_{M\times N}=(\d_M+\d_N)(\d_M^*+d_N^*)+(\d_M^*+d_N^*)(\d_M+\d_N)\\=
\d_M\d_M^*+\d_M^*\d_M + \d_N\d_N^*+\d_N^*\d_N=\square_M+\square_N.
\end{multline*}

	\begin{theorem}\label{main}
        Let $M$ and $N$ be compact flat affine manifolds. Then we have 
        $$
        H^{p,q}(M\times N)=\bigoplus_{0\le i\le p, 0\le j\le q} H^{i,j}(M)\otimes H^{p-i,q-j}(N).
        $$
	\end{theorem}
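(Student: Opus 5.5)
The plan is the Hodge-theoretic argument, mirroring the classical proof for Dolbeault cohomology in \cite{GH}. Fix Riemannian metrics $g_M$, $g_N$ on $M$ and $N$ and put the product metric on $M\times N$. By Theorem \ref{cohomology}, applied to $M$, $N$ and $M\times N$ (all compact), it suffices to prove
$$
{\cal H}^{p,q}_\square(M\times N)=\bigoplus_{i,j}{\cal H}^{i,j}_\square(M)\otimes {\cal H}^{p-i,q-j}_\square(N).
$$
Harmonic forms are finite dimensional by Corollary \ref{Elliptic} b), so this is an equality of finite-dimensional spaces. The cohomology isomorphism is then induced by $[\alpha]\otimes[\beta]\mapsto[\alpha\otimes\beta]$.

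The inclusion ``$\supseteq$'' is immediate from the lemma $\square_{M\times N}=\square_M+\square_N$. If $\square_M\alpha=0$ and $\square_N\beta=0$, then $\square_{M\times N}(\alpha\otimes\beta)=0$.

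For ``$\subseteq$'' I would use spectral decompositions. By Corollary \ref{Elliptic} a), choose orthonormal eigenbases:
\begin{itemize}
\item $\{\alpha_k\}$ of $L^2(\Lambda^{i,j}M)$, with $\square_M\alpha_k=\lambda_k\alpha_k$ and $\lambda_k\ge0$;
\item $\{\beta_l\}$ of $L^2(\Lambda^{p-i,q-j}N)$, with $\square_N\beta_l=\mu_l\beta_l$ and $\mu_l\ge0$.
\end{itemize}
Nonnegativity holds since $\square$ is a sum of $\d\d^*$ and $\d^*\d$. By Theorem \ref{L2Kunneth}, the family $\{\alpha_k\otimes\beta_l\}$, taken over all $(i,j)$, is an orthonormal basis of $L^2(\Lambda^{p,q}(M\times N))$. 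By the lemma, each $\alpha_k\otimes\beta_l$ is an eigenvector of $\square_{M\times N}$ with eigenvalue $\lambda_k+\mu_l$.

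Now let $\omega$ be $\square_{M\times N}$-harmonic and expand $\omega=\sum c_{kl}\,\alpha_k\otimes\beta_l$ in $L^2$. Since $\square_{M\times N}$ is symmetric and the basis elements are smooth eigenvectors (Corollary \ref{Elliptic} c)),
$$
0=(\square_{M\times N}\omega,\alpha_k\otimes\beta_l)=(\omega,\square_{M\times N}(\alpha_k\otimes\beta_l))=(\lambda_k+\mu_l)c_{kl}.
$$
Hence $c_{kl}=0$ unless $\lambda_k=\mu_l=0$. So $\omega$ lies in the span of products of harmonic forms, and that span is finite-dimensional, hence closed.

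The main obstacle is making the analytic steps rigorous. Theorem \ref{L2Kunneth} as stated only shows that decomposable forms are dense, so I need to deduce that the products of eigenbases form a complete orthonormal system. Completeness holds because any $\alpha\otimes\beta$ can be approximated by finite sums $\sum a_k\alpha_k\otimes\sum b_l\beta_l$, giving density of the span. Orthonormality follows because the product metric makes the inner product multiplicative on decomposable tensors of pullbacks. Finally, one must check that the lemma $\square_{M\times N}=\square_M+\square_N$ is compatible with the product metric. This requires that $\tilde\star$ on the product factor as $\tilde\star_M\otimes\tilde\star_N$ up to sign, and I would verify this in flat product coordinates using orthonormal coframes adapted to the product.
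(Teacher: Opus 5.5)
Your proposal is correct and follows essentially the same route as the paper. Both arguments reduce to harmonic forms via Theorem \ref{cohomology}, use $\square_{M\times N}=\square_M+\square_N$ together with the product eigenbasis from Theorem \ref{L2Kunneth}, and use nonnegativity of eigenvalues to conclude that only products of harmonic forms survive; your version is somewhat more careful (you correctly use $\lambda_k,\mu_l\ge 0$ where the paper says ``positive'', and you explicitly justify the completeness and orthonormality of the product basis and the role of the product metric).
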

    \begin{proof}
        According to Theorem \ref{cohomology} it is enough to prove that  
          $$
        H^{p,q}_\square(M\times N)=\bigoplus_{1\le i\le p, 1\le j\le q} H^{i,j}_\square(M)\otimes H^{p-i,q-j}_\square(N).
        $$
        It immediately follows from Lemma \ref{SumOfLaplacians}, that 
        $$
        \bigoplus_{1\le i\le p, 1\le j\le q} H^{i,j}_\square(M)\otimes H^{p-i,q-j}_\square(N)\subset H^{p,q}(M,N)
        $$ 
        Let $\{\alpha_1,\alpha_2\ldots\}$ be a basis of $\bigoplus_{i,j\ge 0} L^2(\Lambda^{i,j}M)$ consisting of eigenvectors of $\square_M$
        and $\{\beta_1,\beta_2\ldots\}$ be a basis of $\bigoplus_{i,j\ge 0} L^2(\Lambda^{i,j}M)$ consisting of eigenvectors of $\square_N$. According to Theorem \ref{L2Kunneth} $\{\alpha_k\otimes \beta_l\}$ is a basis of $\bigoplus_{i,j\ge 0} L^2(\Lambda^{i,j}(M\times N))$  consisting of eigenvectors of $\square_{M\times N}$. Let 
        $$
        \square_M\alpha_k=a_k\alpha_k, \ \ \  \square_N\beta_l=b_l\alpha_l
        $$
        According to Lemma \ref{SumOfLaplacians}, we have 
        $$
        \square_{M\times N} (\alpha_k\otimes \beta_l)=(a_k+b_l)(\alpha_k\otimes \beta_l)
        $$
        Since for any $\omega \in \Lambda^{p,q}M$ we have $g(\d \d^*+\d^*\d \omega,\omega)=|\d\omega|^2+|\d^*\omega|^2$, all eigenvalues $\{a_k\},\{b_l\}$ are positive. Therefore, a form
        $$
        \eta=\sum_{k,l\ge 0} c_{k,l} \alpha_k\otimes \beta_l 
        $$
        is harmonic if and only if for all nonzero $c_{k,l}$ the forms $\alpha_k$ and $\beta_l$ are harmonic. Thus, any harmonic form $\eta\in  H^{p,q}_\square(M\times N)$ lies in the space $$\bigoplus_{1\le i\le p, 1\le j\le q} H^{i,j}_\square(M)\otimes H^{p-i,q-j}_\square(N).$$
    \end{proof}
		\section{Koszul cohomology and Hessian manifolds}
		\begin{defin}
			Let $(M,D)$ be a flat affine manifold. A {\bfseries Hessian} metric on $M$ is a Riemannian metric $g$ such that $Dg$ is a totally symmetric tensor. Equivalently, a Riemannian metric $g$ is Hessian if and only if it is locally expressed as $D\alpha$ where $\alpha$ is a locally defined 1-form. A Hessian metric $g$ is said to be of {\bfseries Koszul type} if it  can be expressed as $g=D\alpha$ where $\alpha$ is a globally defined $1$-form.  
			A Hessian manifold $(M,D,g)$ is a flat affine manifold endowed with a Hessian metric $g$.   
		\end{defin}
		  There is also a coordinate condition: a metric $g=g_{ij} dx^i dx^j$ is Hessian if and if for any $i,j,k$ we have
          $$
          \frac{\d g_{ij}}{\d x^k}-\frac{\d g_{jk}}{\d x^i}=0
          $$
          (\cite{Sh3}).
		\begin{defin}
			A flat affine manifold $M$ is called {\bfseries hyperbolic} if and only if $M=U/\Gamma$ where $U\subset\R^n$ is a convex domain without straight lines and $\Gamma$ is a discrete subgroup of affine automorphisms of $U$.  
		\end{defin}

		\begin{theorem}[\cite{K}]\label{HessianMetricsOnHyperbolicManifolds}
			A compact flat affine manifold $M$ admits a Hessian metric of Koszul type $g=D\alpha$ if and only if $M$ is hyperbolic. 
		\end{theorem}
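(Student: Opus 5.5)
The proof splits into the two implications. For ``hyperbolic $\Rightarrow$ Koszul type'' I would build the metric from the characteristic function of the cone. For the converse I would use the convexity of a primitive of $\alpha$ on the universal cover.

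\emph{Hyperbolic $\Rightarrow$ Koszul type.} Let $M=U/\Gamma$ be compact, with $U\subset\R^n$ convex and containing no straight lines. By the result of \cite{V} quoted in the introduction, $U$ is a cone, which I place with vertex at the origin. Its dual cone $U^*=\{\xi:\langle x,\xi\rangle>0 \text{ for all } x\in U\}$ is open and nonempty, precisely because $U$ contains no lines. I would then set
$$
\phi(x)=\int_{U^*}e^{-\langle x,\xi\rangle}\,d\xi .
$$
Each $\gamma\in\Gamma$ preserves the cone, so it fixes the vertex and is linear. Hence $\phi(\gamma x)=|\det\gamma|^{-1}\phi(x)$, and the $1$-form $\alpha=d\log\phi$ is $\Gamma$-invariant and descends to $M$. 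Then $D\alpha=D^2\log\phi$. This is positive definite by the Cauchy--Schwarz inequality applied to the probability measure $e^{-\langle x,\xi\rangle}d\xi/\phi(x)$, since $D^2\log\phi(v,v)$ is the variance of the function $\langle v,\xi\rangle$ under that measure.

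\emph{Koszul type $\Rightarrow$ hyperbolic.} Suppose $g=D\alpha$ with $\alpha$ global. The antisymmetric part of $D\alpha$ is $d\alpha$, and $D\alpha$ is symmetric, so $d\alpha=0$. On the universal cover $\tilde M$ we therefore have $\alpha=df$, with $D^2f=g>0$. Let $\mathrm{dev}:\tilde M\to\R^n$ be the developing map. Along any affine geodesic $c(t)$ in $\tilde M$, put $h(t)=\alpha(\dot c)$. Since $D\dot c=0$, we get $h'(t)=g(\dot c,\dot c)>0$.

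Compactness of $M$ gives a constant $C$ with $\alpha(v)^2\le C\,g(v,v)$, because both sides are continuous on the unit sphere bundle and $g>0$. Hence $h'\ge h^2/C$. This Riccati inequality forces $h$ to blow up in finite forward time whenever $h(0)>0$, and in finite backward time whenever $h(0)<0$. The case $h\equiv 0$ is impossible, since $h'>0$.

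From this I would derive two facts. First, no affine geodesic of $\tilde M$ is defined on all of $\R$, which will exclude straight lines in the image. Second, the blow-up of $h$ should occur exactly where the $g$-length of the geodesic diverges. Since $g$ descends to the compact manifold $M$, it is complete on $\tilde M$, so affine geodesics can only leave $\tilde M$ ``at infinity''.

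\emph{Main obstacle.} The hard step is turning these geodesic estimates into the statement that $\mathrm{dev}$ is a diffeomorphism onto a convex domain. My first attempt would run as follows. Fix $p\in\tilde M$ and show that the set of $v\in T_p\tilde M$ for which the affine geodesic $t\mapsto\exp_p(tv)$ exists on $[0,1]$ is convex and open, and that $\exp_p$ is injective on it. Injectivity should follow from the strict convexity of $f$: if two geodesics from $p$ met again, then $df$ restricted to each segment would be strictly monotone, which contradicts the endpoints coinciding in the developed picture. To show that $\mathrm{dev}$ is onto a convex set I would use $g$-completeness: a limit point of geodesic segments from $p$ in the closure of the image either lies in the image or is approached along curves of infinite $g$-length. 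This gives that the image is open and closed relative to its convex hull, hence convex. Finally, because no complete affine line exists in $\tilde M$, the convex image $U$ contains no straight lines, and $M=U/\pi_1(M)$ is hyperbolic.
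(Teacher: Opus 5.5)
The paper gives no proof of this statement; it is quoted from Koszul, so I am judging your proposal against the standard argument.

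\textbf{First implication.} This part is fine. Two small remarks:
\begin{itemize}
\item As you define it, $U^*$ is the closed dual cone minus $0$, not an open set. It does have nonempty interior, and that is all the variance argument needs.
\item You can avoid Vey's theorem. Apply the same construction to the cone $\{(tx,t):t>0,\ x\in U\}\subset\R^{n+1}$ and restrict the logarithm of its characteristic function to $U\times\{1\}$. Every affine automorphism of $U$ extends to a linear automorphism of this cone.
\end{itemize}

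\textbf{Converse: what is correct.} The preliminary steps are sound: $d\alpha=0$, completeness of the lifted metric, infinite $g$-length of escaping affine geodesics, and the Riccati argument excluding complete affine lines.

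\textbf{Converse: the gap.} The gap is exactly the step you flag as the main obstacle, and your sketch does not close it.
\begin{itemize}
\item Injectivity of $\exp_p$ on its domain $E_p$ is automatic, since $\mathrm{dev}(\exp_p v)=\mathrm{dev}(p)+v$. It says nothing about injectivity of $\mathrm{dev}$ on $\tilde M$. That requires $\exp_p(E_p)=\tilde M$, which you never address.
\item Your open--closed argument is a non sequitur. The fact that a boundary point of $E_p$ is reached only along a geodesic of infinite $g$-length is just completeness applied to the definition of $\partial E_p$. It holds equally well for a star-shaped non-convex $E_p$, so it cannot show that $E_p$ is closed in its convex hull.
\end{itemize}
Nothing in your plan uses the convexity of $f$ across a two-parameter family of geodesics rather than along one geodesic at a time. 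That is the missing idea.

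\textbf{One way to supply it.} Suppose $\Phi=\exp_A$ is defined on a developed triangle $ABC$ except on a nonempty compact set $K$ inside the open side $(A,C)$. This is the situation at the first bad parameter when you sweep the geodesics from $A=p$ to the points of a segment $BC$ lying in a convex chart. Let $z$ be the point of $K$ nearest to $A$.
\begin{itemize}
\item Along the side from $A$, put $h=(f\circ c)'$. Cauchy--Schwarz gives $\int_0^{t_*}\sqrt{h'}\,dt\le\sqrt{t_*}\,\bigl(\lim_{t\to t_*}h(t)-h(0)\bigr)^{1/2}$. So infinite $g$-length forces $h\to+\infty$ at $z$.
\item On the other hand, $f\circ\Phi$ is convex on the convex set $ABC\setminus[A,C]$. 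Its lower semicontinuous convex closure agrees with $f\circ\Phi$ wherever $\Phi$ extends continuously, so it is finite at $A$ and near $C$.
\item Hence the closure is a finite convex function on all of $[A,C]$. Its left derivative at the interior point $z$ is finite and bounds $h$ on $[A,z)$, which is a contradiction.
\end{itemize}

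\textbf{How this completes the proof.} The lemma shows that $\exp_p(E_p)$ is closed, hence equal to $\tilde M$ for every $p$. Take $q$ in the closure, a convex chart around $q$, and a point $q'=\exp_p(v)$ in that chart, and sweep along the segment from $q'$ to $q$. Consequently $\mathrm{dev}$ is injective, and its image is convex because any two points are joined by an affine geodesic. Only at this point do your no-lines argument and the conclusion $M=U/\pi_1(M)$ go through.
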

        	\begin{theorem}[\cite{K}] \label{CohomologyOfHyperbolicManifolds}
		    Let $M$ be a compact Hessian manifold. Then $H^{1,1}(M)=0$ if and only if $M$ is hyperbolic. 
		\end{theorem}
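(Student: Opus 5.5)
The last statement is Koszul's theorem (Theorem \ref{CohomologyOfHyperbolicManifolds}): for a compact Hessian manifold, $H^{1,1}(M)=0$ if and only if $M$ is hyperbolic. I would deduce it from Theorem \ref{HessianMetricsOnHyperbolicManifolds}, using the description of $H^{1,1}(M)$ through $1$-forms given in the introduction. The key identity is $\d(1\otimes\alpha)=D\alpha$ for $\alpha\in\Lambda^1M=\Lambda^{0,1}M$, so $\operatorname{im}(\d:\Lambda^{0,1}M\to\Lambda^{1,1}M)=\{D\alpha\}$. A Hessian metric $g$ is $\d$-closed in $\Lambda^{1,1}M$; this is exactly the coordinate condition $\d_k g_{ij}=\d_i g_{kj}$. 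Hence $g$ defines a class $[g]\in H^{1,1}(M)$, and $[g]=0$ if and only if $g=D\alpha$ for a global $1$-form $\alpha$, i.e. $g$ is of Koszul type.

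($\Rightarrow$) Suppose $H^{1,1}(M)=0$. Since $M$ is Hessian, it carries a Hessian metric $g$, and $[g]=0$ forces $g=D\alpha$ for a global $\alpha$. By Theorem \ref{HessianMetricsOnHyperbolicManifolds}, $M$ is hyperbolic. This direction is formal.

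($\Leftarrow$) Suppose $M$ is hyperbolic, so by Theorem \ref{HessianMetricsOnHyperbolicManifolds} there is a Koszul-type metric $g_0=D\alpha_0$. We must show that every $\d$-closed $\omega\in\Lambda^{1,1}M$ is $\d$-exact. I would proceed in three steps.

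\emph{Reduce to symmetric classes.} Split $\omega=\omega^s+\omega^a$ into symmetric and antisymmetric parts in $\Lambda^1\otimes\Lambda^1$. Closedness gives $\d_k\omega_{ij}=\d_i\omega_{kj}$. I would use Shima's decomposition, or a direct Koszul-type argument, to show that the antisymmetric part contributes nothing modulo exact forms. A first idea is to write $\omega^a$ as $D\beta$ for a suitable $\beta$ built from the closed $2$-form $\omega^a$; this step needs care.

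\emph{Handle the symmetric part.} For symmetric closed $\omega$, the form $g_0+t\omega$ is positive definite for small $t$ by compactness, hence a Hessian metric. It therefore suffices to show that on a hyperbolic compact manifold every Hessian metric is of Koszul type.

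\emph{Show every Hessian metric is of Koszul type.} This is the main obstacle. My plan is Koszul's original argument: on the universal cover, which is a convex cone $U$ without lines by \cite{V}, use the characteristic function $\phi(x)=\int_{U^*}e^{-\langle x,\xi\rangle}d\xi$. The $1$-form $\alpha_0=-d\log\phi$ is $\Gamma$-invariant up to $d$ of a constant, which vanishes, so it descends to $M$. It satisfies $D\alpha_0>0$, and its radial vector field $E$ satisfies $D_XE=X$. Then for closed $\omega$ I would try the explicit primitive $\alpha=\iota_E\omega$ (contracting the second slot), adjusted by averaging. Flatness and $D E=\mathrm{id}$ give $D(\iota_E\omega)=\omega+\mathcal L_E$-type terms, and these vanish after averaging over the flow of $E$ and using homogeneity. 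If this computation fails, the fallback is the Hodge theory of Theorem \ref{cohomology} combined with a Bochner-type positivity argument for harmonic $(1,1)$-forms with respect to $g_0$, in the spirit of Shima's vanishing theorem.
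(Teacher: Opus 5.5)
The paper gives no proof of this statement; it quotes it from Koszul \cite{K}, so your argument has to stand on its own. Your direction ($\Rightarrow$) is fine: $[g]=0$ means $g=\d(1\otimes\beta)=D\beta$ for a global $1$-form $\beta$, and Theorem~\ref{HessianMetricsOnHyperbolicManifolds} applies. The direction ($\Leftarrow$) has a genuine gap. Steps 1 and 3 carry all the content, and neither is carried out.

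In Step 1, $\omega^a$ is indeed a closed $2$-form. But $\omega^s$ alone is not $\d$-closed unless $\omega^a$ is $D$-parallel. Removing $\omega^a$ modulo $\d$-exact forms requires $\omega^a=\tfrac12 d\beta$, i.e.\ $[\omega^a]=0$ in $H^2(M,\R)$. That is a consequence of $H^{1,1}(M)=0$, not something you get for free. $H^2(M,\R)$ can be nonzero for compact hyperbolic $M$: take $\R^2_{>0}/\Z^2\cong T^2$, with $\Z^2$ generated by $\operatorname{diag}(2,1)$ and $\operatorname{diag}(1,2)$.

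In Step 3 the averaging mechanism cannot work. Contract $E$ into the \emph{first} slot. Closedness $\d_k\omega_{ij}=\d_i\omega_{kj}$ then gives, for every $\d$-closed $\omega\in\Lambda^{1,1}M$, symmetric or not,
\[
\mathcal L_E\omega=\omega+\d(\iota_E\omega).
\]
So $-\iota_E\omega$ is a primitive only when $\mathcal L_E\omega=0$. The same identity gives $\frac{d}{dt}[\phi_t^*\omega]=[\phi_t^*\omega]$ for the flow $\phi_t$ of $E$, hence $[\phi_t^*\omega]=e^t[\omega]$. The flow rescales classes, so no average over it produces an $E$-invariant representative of $[\omega]$ unless $[\omega]=0$ already. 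Your ``averaging plus homogeneity'' step is therefore circular.

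The missing ingredient is that $\phi_t$ acts by \emph{isometries} of $g_0=D\,d\log\phi$, because $\phi(e^tx)=e^{-nt}\phi(x)$. (Note that $D\,d\log\phi>0$, so the sign of your $\alpha_0$ should be reversed.) Take $\square$ with respect to $g_0$. Since each $\phi_t$ is an affine isometry, $\mathcal L_E$ commutes with $\square$ and is skew-adjoint on $L^2(\Lambda^{1,1}M)$. If $\omega$ is $\square$-harmonic, then $\mathcal L_E\omega-\omega=\d(\iota_E\omega)$ is both harmonic and exact, hence zero. Therefore $\|\omega\|^2=\langle\mathcal L_E\omega,\omega\rangle=0$, and Theorem~\ref{cohomology} gives $H^{1,1}(M)=0$.

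This argument needs neither Step 1 nor the reduction to Hessian metrics in Step 2. Contracting the second slot, as you propose, does not give the displayed identity for non-symmetric $\omega$.
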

		\begin{proposition}
			Let  $(M,D)$ be a flat affine manifold, $\alpha\in \Lambda^1 M$  a closed 1-form and $g\in \Lambda^1M\otimes\Lambda^1M=\Lambda^{1,1}M$  a Riemannian metric.
            \begin{itemize}
                \item[a)] For any $1$-form $\alpha$ we have $\d (1\otimes \alpha)=D\alpha$. Hence, $g$ is a Hessian metric of Koszul type if and only if $g$ is $\d$ exact.
                \item[b)] The metric $g$ is Hessian if and only if $\d g=0.$ 
            \end{itemize}

		\end{proposition}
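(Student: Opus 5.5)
The plan is to compute in flat coordinates $x^1,\ldots,x^n$, using the local formula for $\d$ given in Section 2. A general element of $\Lambda^{0,1}M$ has the form $1\otimes\alpha$ with $\alpha=\sum_j f_j\,dx^j$. Applying the formula for $\d$ gives
$$
\d(1\otimes\alpha)=\sum_j df_j\otimes dx^j=\sum_{i,j}\frac{\d f_j}{\d x^i}\,dx^i\otimes dx^j .
$$
Since $D(dx^j)=0$, the covariant derivative is $D\alpha=\sum_{i,j}\frac{\d f_j}{\d x^i}dx^i\otimes dx^j$. So the two sides of a) agree in every flat chart, and hence globally. I should check that the convention for $D\alpha\in\Lambda^1\otimes\Lambda^1$ places the differentiation index first, as $\d$ does; if the paper's convention is the opposite, the two expressions differ by a transpose.

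This is harmless in the case that matters. When $\alpha$ is closed, $\frac{\d f_j}{\d x^i}=\frac{\d f_i}{\d x^j}$, so $D\alpha$ is symmetric. The second assertion of a) then follows directly. If $g=D\alpha$, then $g=\d(1\otimes\alpha)$ is $\d$-exact. Conversely, if $g=\d\omega$ with $\omega\in\Lambda^{0,1}M$, write $\omega=1\otimes\alpha$, so $g=D\alpha$. Because $g$ is symmetric, $D\alpha$ is symmetric, which is exactly the condition $d\alpha=0$, i.e.\ the antisymmetric part of $D\alpha$ vanishes. Hence $g$ is of Koszul type with a closed form $\alpha$.

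For b), write $g=\sum g_{ij}\,dx^i\otimes dx^j$. Then
$$
\d g=\sum_{k,i,j}\frac{\d g_{ij}}{\d x^k}(dx^k\wedge dx^i)\otimes dx^j=\sum_{k<i,\,j}\Bigl(\frac{\d g_{ij}}{\d x^k}-\frac{\d g_{kj}}{\d x^i}\Bigr)(dx^k\wedge dx^i)\otimes dx^j .
$$
So $\d g=0$ if and only if $\d_k g_{ij}=\d_i g_{kj}$ for all indices. Combined with the symmetry $g_{ij}=g_{ji}$, this says $\d_kg_{ij}$ is totally symmetric in $i,j,k$, which is the coordinate description of a Hessian metric recalled just before the proposition, i.e.\ $Dg$ totally symmetric.

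Alternatively, one can argue via local potentials. If $\d g=0$, apply the flat Poincaré lemma (exactness of the fine resolution of $P^1$) on a contractible chart to get $g=\d(1\otimes\alpha)=D\alpha$ locally. Symmetry of $g$ forces $d\alpha=0$, so locally $\alpha=df$ and $g=D^2f$. The converse direction is the computation $\d D\alpha=\d\d(1\otimes\alpha)=0$.

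The only delicate step is the index and ordering convention in identifying $D\alpha$ with $\d(1\otimes\alpha)$. I will settle it by the closedness/symmetry remark above rather than by appeal to a specific convention.
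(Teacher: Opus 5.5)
Your proof is correct and follows the paper's route: a direct computation in flat coordinates showing $\partial(1\otimes\alpha)=D\alpha$, and expanding $\partial g$ so that $\partial g=0$ becomes the coordinate condition $\partial_k g_{ij}=\partial_i g_{kj}$ for a Hessian metric. You also spell out two points the paper leaves implicit: why $\partial$-exactness of $g$ gives Koszul type (symmetry of $g=D\alpha$ forces $d\alpha=0$), and the harmlessness of the ordering convention for $D\alpha$. Your alternative argument via local potentials is extra and not needed.
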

        \begin{proof}
            The identity $\d (1\otimes \alpha)=D\alpha$ immediately follows from the definition of $\d$. Let $x^1,\ldots,x^n$ be local flat coordinates  and $g=g_{ij}dx^i dx^j$. Then
            \begin{multline*}
                \d g=\d\left( g_{ij}dx^i \otimes dx^j\right)=\frac{\d g_{ij}}{\d x^k} \left(dx^k\wedge dx^i\right)\otimes dx^j \\ =\frac{\d g_{ij}}{\d x^k} \left(dx^k\otimes dx^i-dx^i\otimes dx^k\right)\otimes dx^j =\left(\frac{\d g_{ij}}{\d x^k}-\frac{\d g_{jk}}{\d x^i}\right)dx^k\otimes dx^i\otimes dx^j
            \end{multline*}
         Thus, the metric $g$ is Hessian if and only if $\d g=0.$

        \end{proof}

We  study the set of Hessian metrics on a flat affine manifold. Any Hessian metric $g$ determines a cohomology class $[g]\in H^{1,1}(M)$. For any closed 1-form $\alpha$ such that $g+D\alpha$ is positively definite at every point, the tensor $g+d\alpha $ is again a Hessian metric. Hence, the description of all Hessian metrics reduces to calculating $H^{1,1}(M)$ and characterizing the set of all Hessian classes within it.

		\section{Applications to Hessian metrics} 
        
	 	 For simplicity, we implicitly identify forms on $M$ and $N$ with their pullbacks to $M\times N$.
			\begin{theorem}\label{HessianMetricsOnproducctsOfManifolds}
			Let $M$ and $N$ be compact Hessian manifolds. Then any Hessian metric $g$ on  $M\times N$ is equal to
			$$
			g=g_M+g_N+D \alpha
			$$
			where $g_M, g_N$ are Hessian metrics on $M,N$ and $\alpha$ is a closed 1-form on $M\times N$.
		\end{theorem}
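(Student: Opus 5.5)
Apply the Künneth formula (Theorem \ref{Kunnth}, or Theorem \ref{main} in the compact case) in bidegree $(1,1)$:
$$
H^{1,1}(M\times N)=\bigoplus_{0\le i\le 1,\,0\le j\le 1} H^{i,j}(M)\otimes H^{1-i,1-j}(N).
$$
There are four summands: $H^{1,1}(M)\otimes H^{0,0}(N)$, $H^{0,0}(M)\otimes H^{1,1}(N)$, $H^{1,0}(M)\otimes H^{0,1}(N)$ and $H^{0,1}(M)\otimes H^{1,0}(N)$. We may take $M,N$ connected. Then $H^{0,0}=H^0(\,\cdot\,,P^0)$ consists of the locally constant functions, which is $\R$. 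So the first two summands are $\pi_1^*H^{1,1}(M)$ and $\pi_2^*H^{1,1}(N)$.

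The proof rests on understanding the mixed summands. Note that $H^{1,0}(M)=H^1(M,\R)$ is de Rham cohomology, represented by closed $1$-forms $a\otimes 1$. Also $H^{0,1}(M)=H^0(M,P^1)$ is the space of global $D$-parallel $1$-forms $1\otimes b$. A mixed class $[a\otimes b]$, with $a$ a closed form on $M$ and $b$ a parallel form on $N$, therefore lives in $\Lambda^{1,1}$ as the tensor $a\otimes b$. I would like to show that such classes cannot contribute to the class of a \emph{symmetric} tensor.

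The key step is to show that symmetric $\d$-closed forms have no mixed components. The symmetric $\d$-closed forms are the Hessian-type tensors, and the map $\alpha\mapsto D\alpha=\d(1\otimes\alpha)$ lands in symmetric tensors exactly when $\alpha$ is closed. Write $[g]=\pi_1^*c_M+\pi_2^*c_N+m$ with $m$ mixed, and decompose $g$ on $T(M\times N)=TM\oplus TN$ into blocks $g_{MM}+g_{NN}+g_{MN}+g_{NM}$. I would argue by naturality. Restrict to slices $M\times\{y\}$, i.e. pull back along $i_y:M\to M\times N$. The slice is an affine embedding, so pullback commutes with $\d$. This gives $i_y^*[g]=c_M$, because the mixed terms restrict to $a\otimes i_y^*b=0$, $b$ being a form on $N$. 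Now $i_y^*g$ is a Hessian metric on $M$ representing $c_M$; set $g_M:=i_{y_0}^*g$ and define $g_N$ similarly. Then $g-g_M-g_N$ represents the class $m$ and is symmetric. Its $MM$ block varies in $y$ only by $D$-exact terms, and likewise for the $NN$ block.

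To kill $m$, I would use the symmetry involution $\sigma(\alpha\otimes\beta)=\beta\otimes\alpha$ on $\Lambda^{1,1}$. The mixed class $[a\otimes b]$ pairs the $M$-direction in the first factor with the $N$-direction in the second, and its transpose $b\otimes a$ lies in the other mixed summand $H^{0,1}(M)\otimes H^{1,0}(N)$. A symmetric representative forces $m$ to be $\sigma$-invariant, so $m=\sum a_k\otimes b_k+b'_l\otimes a'_l$ with matching coefficients. Then $\sum_k(a_k\otimes b_k+b_k\otimes a_k)$ equals $D\big(\sum_k f_k\,b_k\big)$ when $a_k=df_k$ is exact. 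In general $a_k$ is only closed, so I would instead write the symmetric tensor $a\otimes b+b\otimes a$ as $D\alpha$ with $\alpha$ locally $f b$. This is the main obstacle: it may fail when $a$ is not exact. If so, the honest conclusion is that $g=g_M+g_N+(\text{mixed parallel term})+D\alpha$. I would then try to absorb the mixed term by checking that $g_M+g_N+m$ is still a valid sum of Hessian metrics after shifting $g_M$ and $g_N$ by symmetric parts. Failing that, I would restrict the statement to the case where the mixed contribution vanishes, for example when $H^{0,1}$ of either factor is zero.

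Finally, once $[g]=[g_M]+[g_N]$, Proposition a) gives $g-g_M-g_N=\d(1\otimes\alpha)=D\alpha$. Since $g-g_M-g_N$ is symmetric and $D\alpha-\sigma(D\alpha)$ equals $d\alpha$ up to sign, $\alpha$ is closed.
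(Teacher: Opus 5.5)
\textbf{The proposal has a genuine gap, and it cannot be closed: the statement is false as written.}

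\emph{The gap.} It is exactly the step you flag yourself. You never show that the mixed K\"unneth component $m$ of $[g]$ vanishes; this component lies in $H^{1,0}(M)\otimes H^{0,1}(N)\oplus H^{0,1}(M)\otimes H^{1,0}(N)$. No argument can show this.

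\emph{Counterexample.} Take $M=N=T^1$ with flat coordinates $x$, $y$. On $T^2$ let $g=dx\otimes dx+dy\otimes dy+c\,(dx\otimes dy+dy\otimes dx)$ with $0<|c|<1$. It has constant coefficients, so it is Hessian. Suppose $g=h(x)\,dx\otimes dx+k(y)\,dy\otimes dy+D\alpha$ with $\alpha$ closed. Write $\alpha=a\,dx+b\,dy+df$ with $a,b\in\R$, so $D\alpha=D(df)$ is the flat Hessian of $f$. Comparing $dx\otimes dy$ coefficients gives $f_{xy}=c$. This is impossible, since $\int_{T^2}f_{xy}=0$. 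Cohomologically, $[g]$ has mixed part $c\,(a^1b^2+a^2b^1)\neq 0$ in the notation of the torus theorem.

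The same happens whenever both compact factors carry nonzero parallel $1$-forms $a$, $b$. Then $g_M+g_N+\epsilon(a\otimes b+b\otimes a)$ is Hessian for small $\epsilon$, and its mixed class is nonzero.

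\emph{The paper's proof fails at the same point.} It shows only that $D(\eta_M\otimes\theta_N+\theta_N\otimes\eta_M)=0$, i.e.\ that the mixed term is parallel. It then concludes that this term is zero, which does not follow.

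\emph{What in your argument is sound.} Pulling back along the affine slices $i_{y_0}\colon M\to M\times N$ correctly shows that $\pi_1^*c_M$ and $\pi_2^*c_N$ are represented by genuine Hessian metrics; the paper merely asserts this. Your last step is also correct: symmetry of $g-g_M-g_N=D\alpha$ forces $d\alpha=0$.

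\emph{What does not work.} The involution $\sigma$ does not preserve $\ker\d$. For $a$ closed but not parallel and $b$ parallel, $a\otimes b$ is $\d$-closed while $b\otimes a$ is not. So ``$m$ is $\sigma$-invariant'' has no meaning in $H^{1,1}$.

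\emph{The right tool} is antisymmetrization $\omega\otimes\eta\mapsto\omega\wedge\eta$.
\begin{itemize}
\item It sends $\d$-closed elements of $\Lambda^{1,1}$ to closed $2$-forms, and sends $\d(1\otimes\alpha)=D\alpha$ to $d\alpha$.
\item Hence it induces a map $H^{1,1}\to H^2_{dR}$ that vanishes on classes of symmetric tensors.
\item Combine this with the de Rham K\"unneth formula and with injectivity of $P^1\to H^1_{dR}$ on compact manifolds (a nonzero parallel form has no zeros).
\end{itemize}
The result is that the mixed part of a Hessian class has the form $\sum_k[a_k\otimes b_k+b_k\otimes a_k]$ with $a_k$, $b_k$ parallel on $M$, $N$.

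\emph{The repair.} Your fallback is the correct one. The conclusion holds when one factor has no nonzero parallel $1$-forms, e.g.\ $H^{0,1}(M)=0$. In general one must allow an extra parallel symmetric term of the form above.
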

       
		\begin{proof}
			According to K\"unneth formula, we have 
			\begin{multline*}
			H^{1,1}(M\times N)\\ =H^{1,1}(M)\oplus  H^{1,1}(N) \oplus \left(H^{1,0}(M)\otimes H^{0,1}(N)\right)\oplus \left(H^{0,1}(M)\otimes H^{1,0}(N)\right)
			\end{multline*}
			
			Therefore, any Hessian metric $g$ admits a form 
			$$
			g=g_M+ g_N +\eta_M\otimes \theta_N +\theta_N'\otimes \eta_M'+ d\alpha,
			$$
            where $g_M,g_N$ are Hessian metrics on $M$ and $N$, respectively; $\eta_M,\eta_M'$ are 1-forms on $M$; $\theta_N,\theta_N'$ is 1-forms on $N$; and $\alpha$ is a closed 1-form on $M\times N$.
            
  			Since the bilinear forms $g,	g_M, g_N,$ and $d\alpha $ are symmetric, the bilinear form $\eta_M\otimes \theta_N +\theta_N'\otimes \eta_M'$ is also symmetric. Hence, 
			$$
			\eta_M=\eta_M, \qquad \theta_N=\theta_N'.
			$$
			Since the tensors $Dg, Dg_M,Dg_N,$ and $D d\alpha$ are totally symmetric, the tensor $D(\eta_M\otimes \theta_N +\theta_N\otimes \eta_M)$ is also totally symmetric.

			Let $X,Y$ be locally defined flat vector fields on $M$ and $Z$ be a locally defined flat vector field on $N$. Then 
			$$
			D(\eta_M\otimes \theta_N +\theta_N\otimes \eta_M)(X,Y,Z)=Z\left((\eta_M\otimes \theta_N +\theta_N\otimes \eta_M)(X,Y)\right)=0.
			$$  
			For the same reason,  for any locally defined flat vector field $X$ on $M$ and any locally defined flat vector fields $Y,Z$ on $N$ we have 
			$$
				D(\eta_M\otimes \theta_N +\theta_N\otimes \eta_M)(X,Y,Z)=0.
			$$
			Also, if $X,Y,Z$ are all on $M$ or all on $N$ then
			$$
				D(\eta_M\otimes \theta_N +\theta_N\otimes \eta_M)(X,Y,Z)=0.
			$$
			Therefore, 
			$$
			\eta_M\otimes \theta_N +\theta_N\otimes \eta_M=0
			$$
			and 
			$$
			g=g_M+g_N+D \alpha.
			$$
		\end{proof}
		
		\begin{cor}
			Let $M$ be a compact Hessian manifold and $N=C/\Gamma$ be a compact hyperbolic manifold. Then any Hessian metric $g$ on $M\times N$ is equal to
			$$
			g=g_M +D\alpha,
			$$
			where $g_M$ is a Hessian metric on $M$ and $\alpha$ is a closed 1-form on $M\times N$. 
		\end{cor}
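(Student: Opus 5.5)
The plan is to apply Theorem \ref{HessianMetricsOnproducctsOfManifolds} and then remove the $N$-summand using Theorem \ref{CohomologyOfHyperbolicManifolds}. Both $M$ and $N$ are compact, and $N$ is a compact hyperbolic flat affine manifold. Note that Theorem \ref{HessianMetricsOnproducctsOfManifolds} asks $N$ to be a Hessian manifold. So the first step is to equip $N$ with a Hessian metric. By Theorem \ref{HessianMetricsOnHyperbolicManifolds}, $N$ carries a Hessian metric of Koszul type $g_0=D\beta$; hence $N$ is Hessian and Theorem \ref{CohomologyOfHyperbolicManifolds} gives $H^{1,1}(N)=0$.

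Rather than quoting the final form of Theorem \ref{HessianMetricsOnproducctsOfManifolds}, I will rerun its proof with this vanishing inserted. By the K\"unneth formula (Theorem \ref{Kunnth} or Theorem \ref{main}),
$$
H^{1,1}(M\times N)=H^{1,1}(M)\oplus H^{1,1}(N)\oplus \left(H^{1,0}(M)\otimes H^{0,1}(N)\right)\oplus\left(H^{0,1}(M)\otimes H^{1,0}(N)\right).
$$
Here the $H^{1,1}(N)$ summand vanishes. The mixed summands are eliminated exactly as in the proof of Theorem \ref{HessianMetricsOnproducctsOfManifolds}. The difference $g-g_M-D\alpha$ would have the form $\eta_M\otimes\theta_N+\theta_N'\otimes\eta_M'$. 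Symmetry of this difference, together with total symmetry of its $D$-derivative, forces it to vanish. The one thing I will check is that the summands obtained this way are honest $\d$-closed representatives: pulled-back classes $[\alpha\otimes\beta]$ are represented by products of representatives, by the cross-product description of $\mu$ in Theorem \ref{ThSHK}. Hence $[g]=[g_M]$ with $g_M$ a $\d$-closed symmetric $(1,1)$-form on $M$, and $g-g_M=D\alpha$.

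The main obstacle is that $g_M$ must be positive definite, i.e.\ an honest Hessian metric on $M$, and not merely a closed symmetric form. I will handle this by modifying the representative. Write $g=g_M+D\alpha$. Restrict $g$ to a slice $M\times\{y\}$; this is a Hessian metric on $M$. The slice $M\times\{y\}$ is flat-affinely embedded and totally geodesic for $D$, so this restriction equals $g_M+D(\alpha|_{M\times\{y\}})$. Thus $g_M':=g_M+D(\alpha|_{M\times\{y\}})$ is a Hessian metric on $M$. Pulling $\alpha|_{M\times\{y\}}$ back along the projection gives a closed $1$-form $\alpha_1$ on $M\times N$ with $D\alpha_1$ equal to the pullback of $D(\alpha|_{M\times\{y\}})$. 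Setting $\alpha'=\alpha-\alpha_1$, still closed, we get $g=g_M'+D\alpha'$, which is the claim.
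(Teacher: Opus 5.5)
Your proposal has a genuine gap: the elimination of the mixed K\"unneth summands, which you take over ``exactly as in the proof of Theorem \ref{HessianMetricsOnproducctsOfManifolds}''. The paper's own proof of the corollary has the same weakness, since it just quotes that theorem and then uses $H^{1,1}(N)=0$. Two things go wrong. First, symmetry of the difference $g-g_M-D\alpha$ is not available. A K\"unneth representative of a class in $H^{1,1}(M)$ need not be symmetric, and the $1$-form $\alpha$ with $\partial(1\otimes\alpha)=D\alpha$ need not be closed, so invoking symmetry is circular. Second, the principle ``symmetric with totally symmetric $D$-derivative, hence zero'' is false. On $T^2=T^1\times T^1$ the tensor $dx\otimes dy+dy\otimes dx$ has exactly the mixed shape $\eta_M\otimes\theta_N+\theta'_N\otimes\eta'_M$, and it is symmetric and parallel. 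Yet it is not a sum of pullbacks from the two factors plus some $D\alpha$: the $dx\otimes dy$-coefficient of $D\alpha$ is $\partial_x\alpha_y$, which has zero mean on every circle $y=\mathrm{const}$. In particular $dx^2+dy^2+2\varepsilon\,dx\,dy$ with $0<|\varepsilon|<1$ is a Hessian metric on $T^2$ that violates Theorem \ref{HessianMetricsOnproducctsOfManifolds}. So hyperbolicity of $N$ must be used for more than $H^{1,1}(N)=0$. Your slice-restriction step is correct and fixes the positivity issue the paper ignores. It even gives closedness of $\alpha'$ for free, since $D\alpha'=g-g_M'$ is symmetric.

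Here is one way to close the gap, taking $M$ connected. (i) Show $H^{0,1}(N)=0$. By Vey, $C$ is a cone, and its vertex is fixed by $\Gamma$ because it is the only extreme point of $\bar C$. A parallel $1$-form on $N$ therefore lifts to $d\ell$ with $\ell$ a $\Gamma$-invariant linear function. Such an $\ell$ descends to the compact $N$, so it is bounded on $C$; it is also homogeneous of degree one, hence $\ell=0$. This removes $H^{1,0}(M)\otimes H^{0,1}(N)$. (ii) Using $H^{1,1}(N)=0$ and (i), write $g=\tilde g_M+\sum_l\zeta^l\otimes\phi^l+D\beta$, with $\zeta^l$ closed on $N$, $\phi^l$ parallel on $M$, and $\beta\in\Lambda^1(M\times N)$ arbitrary. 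For local flat fields $X$ on $M$ and $Z,Z'$ on $N$ one has $g(Z,Z')=Z(\beta(Z'))$, so each restriction $\beta_x=\beta|_{\{x\}\times N}$ is closed. One also has $g(X,Z)=X(\beta(Z))$ and $g(Z,X)=\sum_l\zeta^l(Z)\phi^l(X)+Z(\beta(X))$. Equating these by symmetry and taking classes in $H^1(N)$ on the slice $\{x\}\times N$ gives $\sum_l\phi^l\otimes[\zeta^l]=dF$, where $F(x)=[\beta_x]\in H^1(N)$. For each linear functional $\lambda$ on $H^1(N)$, the $1$-form $\lambda\circ dF$ is parallel and vanishes at a maximum point of $\lambda\circ F$ on the compact $M$, so it vanishes identically. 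Hence the mixed class is zero and $g=\tilde g_M+D\beta'$, after which your slice argument finishes the proof.
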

        
        \begin{proof}
            According to Theorem \ref{HessianMetricsOnproducctsOfManifolds}, we have 
            $$
            g=g_M+g_N+D \alpha,
            $$
            where $g_M, g_N$ are Hessian metrics on $M,N$ and $\alpha$ is a closed 1-form on $M\times N$. According to Theorem \ref{CohomologyOfHyperbolicManifolds}, there exist a closed 1-form $\beta$ on $N$ such that $g_N=D\beta$. Therefore,
            $$
            g=g_M+D\alpha',
            $$
            where $\alpha'=\alpha+\beta$.
        \end{proof}
		\begin{theorem}\label{TorusCohomoly}
			Let $T^n=\R^n/\Z^n$ be a flat torus. Then
			$$
			H^{**}(T^n)=\R[a^1,\ldots,a^n,b^1,\ldots,b^1n]/_{\left(a^1\right)^2,\ldots, \left(b^n\right)^2}.
			$$
			where $a^i=[dx^i\otimes 1]\in H^{1,0} (T^n), b^i=[1\otimes dx^i]\in H^{0,1} (T^n)$.
		\end{theorem}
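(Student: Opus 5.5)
The plan is to compute $H^{*,*}$ of the circle directly and then apply the Künneth formula (Theorem \ref{Kunnth}, or Theorem \ref{main}, since $T^n$ is compact) inductively, using $T^n=S^1\times\cdots\times S^1$ with the product flat structure.

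\textbf{The circle.} Let $S^1=\R/\Z$ with flat coordinate $x$. Here $\Lambda^{p,q}S^1$ is nonzero only for $p,q\in\{0,1\}$. Since $dx$ is a flat $1$-form, $\Lambda^{p,1}S^1=\Lambda^pS^1\otimes dx$. By the computation in Lemma \ref{lemma26}, $\d(\alpha\otimes\beta)=d\alpha\otimes\beta$ for flat $\beta$. Hence each complex $(\Lambda^{*,q}S^1,\d)$ is isomorphic to the de Rham complex of $S^1$, and $H^{p,q}(S^1)\cong H^p_{dR}(S^1)$, which is $\R$ for $p=0,1$. In other words, $P^q$ is a trivial local system on $S^1$. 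Concretely,
$$H^{0,0}=\R\cdot 1,\quad H^{1,0}=\R\cdot a,\quad H^{0,1}=\R\cdot b,\quad H^{1,1}=\R\cdot ab,$$
where $a=[dx\otimes 1]$ and $b=[1\otimes dx]$. With respect to the product $\wedge$ on $\Lambda^{*,*}$ defined in the Hodge theory section, $a^2=b^2=0$, and $ab=[dx\otimes dx]$ spans $H^{1,1}$.

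\textbf{Multiplicativity.} Two facts are needed.
\begin{itemize}
\item[(i)] The product $\wedge$ descends to cohomology. This follows from a Leibniz rule $\d(\omega\wedge\eta)=\d\omega\wedge\eta\pm\omega\wedge\d\eta$, which is checked in flat coordinates from the coordinate formula for $\d$.
\item[(ii)] The Künneth isomorphism is given by the cross product $\alpha\otimes\beta\mapsto\pi_1^*\alpha\wedge\pi_2^*\beta$. This is the map $\mu$ of Theorem \ref{ThSHK}. It is also how the map appears in the Hodge-theoretic proof of Theorem \ref{main}, where harmonic forms $\alpha_k\otimes\beta_l$ are pulled back and multiplied.
\end{itemize}

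\textbf{Induction.} Applying the Künneth formula $n-1$ times gives
$$H^{*,*}(T^n)\cong \bigotimes_{i=1}^n H^{*,*}(S^1_i)$$
as bigraded vector spaces, with $S^1_i$ the circle in coordinate $x^i$. Under this isomorphism the pullbacks of $a,b$ from the $i$-th factor are $a^i,b^i$. A basis is therefore given by the monomials $a^Ib^J=[dx^I\otimes dx^J]$ with $I,J\subset\{1,\dots,n\}$ increasing. These classes are nonzero: the forms $dx^I\otimes dx^J$ have constant coefficients and are $\square$-harmonic for the flat metric, by Lemma \ref{lemma26} and Theorem \ref{cohomology}.

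The products in $H^{*,*}(T^n)$ come from the wedge products $(dx^I\otimes dx^J)\wedge(dx^K\otimes dx^L)$. This gives the relations $(a^i)^2=(b^i)^2=0$, together with the sign rules $a^ia^j=-a^ja^i$ and $b^ib^j=-b^jb^i$. The notation $\R[\dots]$ in the statement should be read with these signs, and the $a$'s and $b$'s should be taken to commute with each other. The resulting algebra has dimension $4^n$, which matches $\dim\bigotimes_i H^{*,*}(S^1_i)$. The natural surjection from the quotient algebra onto $H^{*,*}(T^n)$ is therefore an isomorphism.

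\textbf{Main obstacle.} The delicate point is step (ii): that the abstract sheaf-theoretic Künneth isomorphism agrees with the wedge/cross product. If this is awkward to extract from \cite{D}, I would bypass it and prove the result on $T^n$ directly with Hodge theory:
\begin{itemize}
\item Take the flat metric on $T^n$. By Lemma \ref{lemma26}, a form $\sum f_{IJ}\,dx^I\otimes dx^J$ is $\square$-harmonic exactly when each coefficient $f_{IJ}$ is harmonic, hence constant.
\item So $H^{p,q}(T^n)$ has basis $\{[dx^I\otimes dx^J]\}$ by Theorem \ref{cohomology}.
\item Constant forms are closed under $\wedge$, and the product of two harmonic representatives is again a harmonic representative. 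This gives the ring structure immediately.
\end{itemize}
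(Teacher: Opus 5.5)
Your proposal follows the paper's proof: compute $H^{*,*}$ of the circle by identifying each complex $(\Lambda^{*,q}T^1,\d)$ with the de Rham complex via $\alpha\mapsto\alpha\otimes 1$ and $\alpha\mapsto\alpha\otimes dx$, then apply the K\"unneth formula (Theorem \ref{main}) to $T^n=(T^1)^n$. Your additional points are correct and go beyond what the paper writes, since its proof leaves the multiplicative structure implicit. These are the Leibniz rule for $\wedge$, the cross-product description of the K\"unneth map, and the observation that the $a^i$ anticommute among themselves while commuting with the $b^j$. Your fallback via constant-coefficient $\square$-harmonic forms for the flat metric is also valid.
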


        \begin{proof}
            According to Theorem \ref{main}, it is enough to prove that $$
H^{**}(T^1)=\R [a,b]/_{a^2,b^2},
            $$
         where $a=[dx\otimes1]$ and $b=[1\otimes dx]$.
            
We have 
$$
\d(\alpha\otimes 1)=(e\circ D(\alpha))\otimes 1=d\alpha\otimes 1.
$$
Therefore, the correspondence 
$
[\alpha] \mapsto[\alpha\otimes 1]
$
sets an isomorphism  $$H^{p}(T^1)\simeq H^{p,0}(T^1).$$ In particular, we get that
$$
{H^{0,0}(T^1)=\R[1\otimes 1]} \qquad \text{and} \qquad H^{1,0}(T^1)=\R[dx\otimes 1].
$$
            
            Any element of $1\otimes \alpha\in H^{0,1}(T^1)=\operatorname{ker} \left( \d :\Lambda^{0,1}T^1\to \Lambda^{1,1}T^1\right)$ admits a form $1\otimes \alpha$.
            We have 
            $$
            \d((1\otimes \alpha))= e D(1\otimes \alpha)=0
            $$
            if and only if $D\alpha=0$. The space of flat 1-forms on $T^1$ is 1-dimensional and generated by $dx$, that is $$
            H^{0,1}(T^1)=\R[1\otimes dx].
            $$

Any element of $H^{0,1}(T^1)$ admits a form $f\otimes dx$, where $f\in C^\infty(T^1)$; Any element of $H^{1,1}(T^1)$ admits a form $\alpha\otimes dx$, where $\alpha\in \Lambda^1T^1$.
 For any smooth function $f$ we have
   $$
   \d ((f\otimes dx))=e\circ (D(f)\otimes dx)=df\otimes dx.
   $$
Therefore, the correspondence 
$
[\alpha] \mapsto[\alpha\otimes dx]
$
sets an isomorphism  $$H^{p}(T^1)\simeq H^{p,1}(T^1).$$ In particular, we get that
$$
{H^{0,1}(T^1)=\R[1\otimes dx]} \qquad \text{and} \qquad H^{1,1}(M)=\R[dx\otimes dx].
$$

            We have proved that $H^{**}(T^1)=\R [a,b]/_{a^2,b^2}$, where $a=[dx\otimes 1]$ and $b=[1\otimes dx]$ are generators of $H^{1,0}(T^1)$ and $H^{0,1}(T^1)$.

        \end{proof}

			
			\begin{theorem}
				Let $(M,D)$ be a compact flat affine manifold that admits a $D$-flat Riemannian metric. Then any Hessian metric on $M$ is equal to 
				$$
				g=g_{\text{flat}} +D\alpha
				$$
				where $g_{\text{flat}}$ is a flat Riemannian metric on $M$ and $\alpha $ is a closed 1-form.
			\end{theorem}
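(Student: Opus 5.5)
The plan is to reduce to the flat torus and compute $H^{1,1}(M)$ there. Let $h$ be a $D$-flat Riemannian metric on $M$, meaning $Dh=0$. Then $D$ is the Levi-Civita connection of $h$, so $M$ is a compact flat Riemannian manifold. By Bieberbach's theorem, there is a finite normal covering $\pi\colon T\to M$ by a flat torus $T=\R^n/\Lambda$, with finite deck group $G$ acting by affine isometries. Since all groups are real vector spaces and $G$ is finite, a transfer (averaging) argument on the complexes $\Lambda^{*,q}$ gives
$$
H^{p,q}(M)=H^{p,q}(T)^G .
$$
Concretely, $\pi^*$ is injective with left inverse $\frac{1}{|G|}\sum_{\gamma\in G}\gamma^*$ followed by descent, and this operator commutes with $\d$.

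Next I compute $H^{1,1}(T)$. After a linear change of flat coordinates, $T\cong \R^n/\Z^n$ as a flat affine manifold. By Theorem~\ref{TorusCohomoly}, $H^{1,1}(T)$ is spanned by the classes $[dx^i\otimes dx^j]$, and these are exactly the classes of $D$-parallel $(1,1)$-tensors. So each class in $H^{1,1}(T)$ has a unique constant-coefficient representative $c=c_{ij}dx^i\otimes dx^j$. Uniqueness holds because a constant tensor that is $\d$-exact is zero: by the Künneth/$T^1$ computation, $H^{1,1}(T)\cong\R^{n^2}$ with the constant tensors mapping bijectively onto it.

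Since $G$ acts by affine maps, it preserves the space of constant tensors. The constant representative of a $G$-invariant class is therefore $G$-invariant, and hence descends to a $D$-parallel $(1,1)$-tensor on $M$. It follows that every class in $H^{1,1}(M)$ has a unique $D$-parallel representative.

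Now let $g$ be a Hessian metric on $M$. By the proposition characterising Hessian metrics, $\d g=0$, so $[g]\in H^{1,1}(M)$, and we may write $g=c+\d\beta$ with $c$ parallel and $\beta\in\Lambda^{0,1}M=\Lambda^1M$. Thus $\d\beta=D\beta$. Two things remain to check.

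First, $c$ is symmetric. The antisymmetric part of $D\beta$ is $\tfrac12 d\beta$. Since $c$ is parallel, $\operatorname{Alt}(c)=-\tfrac12 d\beta$ is a parallel exact $2$-form. Its pullback to $T$ is a constant exact $2$-form, and such a form is zero because constant forms represent nonzero de Rham classes on a torus. Hence $d\beta=0$, so $\beta=:\alpha$ is closed, $c$ is symmetric, and $g=c+D\alpha$.

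Second, $c$ is positive definite; I expect this to be the main point. I would average over the torus: pull back to $T=\R^n/\Z^n$ and integrate $g$ against the flat Lebesgue measure. We have
$$
\int_T (D\alpha)_{ij}\,dx=\int_T \d_i\alpha_j\,dx=0,
$$
since this is the integral of a derivative of a periodic function. Hence $c_{ij}=\int_T g_{ij}\,dx$, which is positive definite because $g$ is. Therefore $g_{\text{flat}}:=c$ is a $D$-parallel, hence flat, Riemannian metric, and $g=g_{\text{flat}}+D\alpha$ with $\alpha$ closed.
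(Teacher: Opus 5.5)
Your proof is correct, and it differs from the paper's in how it is organized and in the positivity step.

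\textbf{Organization.} The paper stays on the torus. It writes $\pi^*g=\tilde g_{\text{flat}}+D\tilde\alpha$, checks positivity there, and only at the end averages both $\tilde g_{\text{flat}}$ and $\tilde\alpha$ over the deck group and pushes them down to $M$. You instead first identify $H^{1,1}(M)=H^{1,1}(T)^G$ by transfer. Uniqueness of the constant representative then makes it automatically $G$-invariant, so $c$ itself needs no averaging. This gives you a stronger structural statement: every class in $H^{1,1}(M)$ has a unique parallel representative, so the parallel part of $g=g_{\text{flat}}+D\alpha$ is uniquely determined by $g$.

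\textbf{Positivity.} The paper picks a maximum point $p$ of $\tilde\alpha(X)$ for a flat vector field $X$. There $D\tilde\alpha(X,X)=X(\tilde\alpha(X))$ vanishes, so $\tilde g_{\text{flat}}(X,X)=\pi^*g(X,X)|_p>0$. You integrate instead. Both arguments use the same fact, that $D\alpha(X,X)$ is a derivative of a periodic function, one pointwise and the other on average. Your version also identifies $g_{\text{flat}}$ explicitly as the flat-coordinate Lebesgue average of $\pi^*g$ over the torus.

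\textbf{Closedness of the primitive.} You show that the primitive is closed because a parallel exact $2$-form on a torus vanishes. The paper simply asserts that $\tilde\alpha$ is closed and uses this to deduce that $\tilde g_{\text{flat}}$ is symmetric. So your write-up fills a step the paper skips.
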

            \begin{proof}
                According to Bieberbach theorem, there is a finite covering $\pi: T^n\to M$, where $T^n$ is a torus. In other words, we have $M=T^n/\Gamma$, where $\Gamma$ is discrete subgroup of affine automorphisms. According to Theorem \ref{TorusCohomoly}, the cohomology class $[\pi^*g]\in H^{1,1}(T^n)$ can be represented by a flat tensor $\tilde g_{\text{flat}}$. That is, we have
                $$
                \pi^*g= \tilde g_{\text{flat}}+ D\tilde\alpha, 
                $$
                where $\tilde\alpha$ is a closed 1-form on $T^n$. Since the tensors $\pi^*g$ and $D\tilde \alpha$ symmetric, the tensor $\tilde g_{\text{flat}}$ is symmetric, too. Let us check that $\tilde g_{\text{flat}}$ is positive definite. It is enough to check that for any nonzero flat vector field $X$, we have ${\tilde g_\text{flat}(X,X)>0}$. Let the function $\tilde \alpha(X)$ reach a maximum at a point $p\in T^n$. Then 
                $$
                D\tilde\alpha(X,X)|_p=X(\tilde\alpha(X))|_p=0.
                $$
                Hence
                $$
                \tilde g_{\text{flat}}(X,X)=\tilde g_{\text{flat}}(X,X)|_p=\pi^*g(X,X)|_p+D\tilde\alpha(X,X)|_p=\pi^*g(X,X)|_p>0.
                $$
                We have proved that $g_{\text{flat}}$ is Riemannian metric. Define the averages
                $$
                \hat g_{\text{flat}}=\frac{1}{|\Gamma|}\sum_{\gamma\in \Gamma}\gamma^*\tilde g_{\text{flat}} \qquad  \text{and} \qquad\hat \alpha=\frac{1}{|\Gamma|}\sum_{\gamma\in \Gamma}\gamma^*\tilde\alpha. 
                $$
                Since $\pi^* g$ is $\Gamma$-invariant we have 
                  $
                  \pi^*g= \hat g_{\text{flat}}+ D\hat\alpha.
                  $
                  Then we can push down  $\hat g_{\text{flat}}$ and $\hat\alpha$ to $M$  and obtain a flat metric $g_\text{flat}$ and a close 1-form $\alpha$ on $M$ such that 
                  $
                  g=g_{\text{flat}}+ D\alpha.
                  $
            \end{proof}

\end{document}